\newtheorem{theorem}{Theorem}
\newtheorem{proposition}{Proposition}
\newtheorem{definition}{Definition}
\newtheorem{remark}{Remark}
\newcommand{\onetom}{1,\cdots,m}
\newcommand{\onetoN}{1,\cdots,N}
\newcommand{\R}{\mathbb R}
\begin{document}

\begin{frontmatter}
\title{Pinning networks of coupled dynamical systems with Markovian switching
couplings and event-triggered diffusions}
\author[fdu,fdub]{Wenlian Lu\corref{cor1}}
\ead{wenlian@fudan.edu.cn}
\author[smu]{Yujuan Han}
\ead{yjhan@shmtu.edu.cn}
\author[fdu,fduc]{Tianping Chen}
\ead{tchen@fudan.edu.cn}
\cortext[cor1]{Corresponding author.}
\address[fdu]{School of Mathematical Sciences, Fudan University, Shanghai 200433, China}
\address[smu]{College of Information Engineering, Shanghai Maritime University, Shanghai 201306, China}
\address[fdub]{Centre for Computational Systems Biology, Fudan University, Shanghai 200433, China}
\address[fduc]{School of Computer Science, Fudan University, Shanghai 200433, China}

\begin{abstract}
In this paper, stability of linearly coupled dynamical systems with feedback pinning algorithm is studied. Here, both the coupling matrix and the set of pinned-nodes vary with time, induced by a continuous-time Markov chain with finite states. Event-triggered rules are employed on both diffusion coupling and feedback pinning terms, which can efficiently reduce the computation load, as well as communication load in some cases  and be realized by the latest observations of the state information of its local neighborhood and the target trajectory. The next observation is triggered by certain criterion (event) based on these state information as well. Two scenarios are considered: the continuous monitoring, that each node observes the state information of its neighborhood and target (if pinned) in an instantaneous way, to determine the next triggering event time, and the discrete monitoring, that each node needs only to observe the state information at the last event time and predict the next triggering-event time. In both cases, we present several event-triggering rules and prove that if the conditions that the coupled system with persistent coupling and control can be stabilized are satisfied, then these event-trigger strategies can stabilize the system, and Zeno behaviors are excluded in some cases. Numerical examples are presented to illustrate the theoretical results.
\end{abstract}
\begin{keyword}
pinning control; event-triggered rule; coupled dynamical system; Markovian switching
\end{keyword}
\end{frontmatter}
\section{Introduction}
Control and synchronization of large-scale dynamical systems
have attracted increasing interests over the last several decades \cite{Liu12}-\cite{Lu07}. When a
networked system is unstable by itself, many control strategies are
designed to stabilize the networked system. Among them, pinning control is effective, because it is economically realized by controlling a partial of the nodes, instead of all nodes
in the network.
The general idea behind pinning control is that when applying some local
feedback controllers only to a fraction of nodes, the rest
of nodes can be propagated through the network interactions among nodes \cite{Xiang}-\cite{Chen07}.

More related to the present paper, for example in \cite{han}, the authors investigated the pinning control problem of coupled dynamical systems with Markovian switching couplings
and Markovian switching controller-set. In \cite{han} and most existing works in linearly coupled dynamical systems, each node needs to gather information of its own state and neighbors states and update them continuously or in a fixed sampling rate \cite{Lu04}. However, as pointed out in \cite{Astrom},
the event-based sampling technique showed better performance than sampling periodically in time for some simple systems. Hence, a number of researchers suggested that the event-based control algorithms can be utilized for the purpose to reduce communication and computation load in networked systems \cite{Tabuada}-\cite{Wang11} but still maintain control performance \cite{Wang11}-\cite{Yi1}. Therefore, the event-based control is particularly suitable for networked systems with limited resources and so has attracted wide interests in the scope of distributed control of networked systems. The idea of event-triggered control can be regarded as a specific temporal discretization approach, which was studied before \cite{Lu04,Lu07}. As application, the event-based control was applied for consensus of multi-agent systems. For instance,
\cite{Dimarogonas} investigated centralized and
distributed formulation of event-driven strategies for consensus of multi-agent systems and proposed a self-triggered setup; \cite{Johannesson,Rabi} studied the stochastic event-driven strategies;
\cite{Seyboth} introduced event-based control strategies for both networks of single-integrators with time-delay in communication links and networks of double-integrators; By using scattering transformation,
\cite{YuHan} studied the output synchronization problem of multi-agent systems with event-driven communication in the presence of constant communication delays.

In some recent papers \cite{Alderisio}-\cite{ZLiu}, the authors addressed event-triggered algorithms for pinning control of networks. \cite{Alderisio} gave an exponentially decreasing threshold function, while the event-triggering threshold in \cite{Gao,ZLiu} is prescribed by the continuous or discrete states of agents and target. In these works, sufficient conditions were proposed, which are based on the control gain, some quantities of the uncoupled node dynamics and the minimum eigenvalue of the augmented Laplacian.

Motivated by these works including our previous work \cite{han}, in the present paper, we employ the event-triggered strategy in both coupling configuration and pinning control terms to realize stability in coupled dynamical systems with Markovian switching couplings and pinned node set.
At each node, the diffusion coupling and feedback pinning terms are piecewise static based on the information of its local neighborhood and the target trajectory only at the latest time of event, which is triggered by some specified criteria derived from the information of its local neighborhood and target. In other words,
once the triggering criterion of node is satisfied, the diffusion coupling and pinning terms will be updated; otherwise these terms are constant between two successive event time points.
We consider two scenarios: continuous monitoring and discrete monitoring.
In the continuous-monitoring scenario, each node observes its neighborhood's and the target's states in an instantaneous way; on the contrary, in discrete-monitoring scenario, each node can only obtain its neighborhood's and target's information at the last event-triggering time point, which results in a small cost of monitoring (communication load) but the triggering events happening more frequently than continuous monitoring, namely, higher computation load. For both scenarios, it is proved that the proposed event-triggered rules guarantee the stability of the coupled dynamical systems under the local pinning algorithm.

This paper is organized as follows. After formulating the underlying problem in Sec. 2, the event-triggering rules of diffusion and pinning terms are proposed to pin
the coupled systems to a homogenous preassigned trajectory of the uncoupled node system by using continuous and discrete monitoring scenarios in Sec. 3 and 4 respectively. Simulations are given in Sec. 5 to verify the theoretical results. Strength, limitations of the work and possible orients of future study are discussed in Sec. 6.
Finally, this paper is concluded in Sec. 7.

\section{Problem formation}

In this paper, we consider a network of linearly coupled dynamical systems with discontinuous diffusions and feedback pinning terms as follows:
\begin{eqnarray}
\dot{x}_i(t)=f(x_{i}(t))+\theta_i(\sigma_t,t_k^i),~~~t^{i}_{k}\le t<t^{i}_{k+1},~i=\onetom,
\label{pinning_ds1}
\end{eqnarray}
where
\begin{eqnarray}
\theta_i(\sigma_t,t_k^i)=-c\sum_{j=1}^{m}L_{ij}(\sigma_t)\Gamma\bigg[x_{j}(t^{i}_{k})-x_{i}(t^{i}_{k})\bigg]
-c\epsilon D_i(\sigma_t)\bigg[x_{i}(t^{i}_{k})-s(t^{i}_{k})\bigg].
\label{pinning_ds2}
\end{eqnarray}

Here, the system contains $m$ nodes, $x_{i}(t) = [x_i^1(t),\cdots,x_i^n(t)]^{\top} \in\R^{n}$ denotes the state vector of node $i$, the continuous map $f(\cdot):\R^{n}\to\R^{n}$ denotes the identical node dynamics. $\sigma_t$ is a homogeneous Markov chain, which will be specified later. $L(\sigma_t)=[L_{ij}(\sigma_t)]_{i,j=1}^{m}\in\R^{m,m}$ is the time-varying Laplacian matrix of the underlying time-varying bi-graph $\mathcal G(\sigma_t)=\{V,E(\sigma_t)\}$, with the node set $V$ and time-varying link set $E(\sigma_t)$: for each pair of nodes $i\ne j$, $L_{ij}(\sigma_t)=-1$ if $i$ is linked to $j$ at time $t$, otherwise $L_{ij}(\sigma_t)=0$, and $L_{ii}(\sigma_t)=-\sum_{j=1}^{m}L_{ij}(\sigma_t)$. 
$c$ is uniform coupling strength at each node. $\Gamma=[\gamma_{ij}]_{i,j=1}^{n}\in\R^{n,n}$ stands for the inner configuration matrix that describes the coupling of components between the state vectors. 
$D_i(\sigma_t)=\delta_{\mathcal D(\sigma_t)}(i)$, where ${\delta}_{\cdot}(\cdot)$ is the characteristic function, i.e. $\delta_{\mathcal D(\sigma_t)}(i) = 1$ if $i \in\mathcal D(\sigma_t) $, otherwise $\delta_{\mathcal D(\sigma_t)}(i) = 0$ for the pinned node subset $\mathcal D(\sigma_t)\subset\{\onetom\}$, where
$\mathcal D(\sigma_t)$ denotes the node subset in $\mathcal V$ that are pinned at time $t$ by a specific node dynamic trajectory $s(t)$ with $\dot{s}=f(s(t))$, $s(0)=s_{0}$. $\epsilon$ is the pinning strength gain over the coupling strength.

Our aim is to provide sufficient conditions to guarantee that $s(t)$ is a global stable  trajectory  for the coupled system, namely
\begin{eqnarray*}
\lim_{t\to\infty}\|x_{i}(t)-s(t)\|=0,~\forall~i=\onetom.
\end{eqnarray*}
Here, we consider the $L_{2}$-vector norm and denote it by $\|\cdot\|$ throughout this paper.
Let $\hat{x}_{i}(t)=x_{i}(t)-s(t)$. Then (\ref{pinning_ds1} and \ref{pinning_ds2}) become
\begin{eqnarray}\label{pinning_ds3}
\dot{\hat{x}}_{i}(t)=f(\hat{x}_{i}(t)+s(t))-f(s(t))-c\sum_{j=1}^{m}L_{ij}(\sigma_t)
\Gamma\bigg[\hat{x}_{j}(t^{i}_{k})-\hat{x}_{i}(t^{i}_{k})\bigg]
-c\epsilon D_i(\sigma_t)\Gamma\hat{x}_{i}(t^{i}_{k}),~t^{i}_{k}\le t<t^{i}_{k+1}
\end{eqnarray}

Suppose $\sigma_{t}$ is a homogeneous continuous Markov chain with a finite
state space $\mathbb{S}=\{1,2,\cdots,N\}$ and its infinitesimal generator
$Q=[q_{uv}]_{N\times N}$ is given by
\begin{eqnarray*}
  \mathbb{P}\{\sigma_{t+\Delta}=v|\sigma_{t}=u\}
  =\left\{\begin{array}{lr}q_{uv}\Delta +o(\Delta), & u\ne v,\\
  1+q_{uu}\Delta+o(\Delta),& u=v,
  \end{array}
  \right.
\end{eqnarray*}
where $\Delta>0$, $o(\Delta)$ is a infinitesimal as $\Delta\to 0$, i.e,  $\lim_{\Delta \to 0}(o(\Delta)/\Delta)=0$,
$p_{uv}=-\frac{q_{uv}}{q_{uu}}>0$ is the transition probability from $u$ to
$v$ if $v\ne u$, while $q_{uu}=-\sum_{v=1,v\ne u}^{N}q_{uv}$. Denote $P=[p_{uv}]$ the transition matrix of the Markov chain. 
The sojourn time in state $u$ is exponentially distributed with parameter
$q_{u}\triangleq -q_{uu}$.

Let $\mathbb P_{t,u}(s,\Lambda)=\mathbb P(\sigma_{t+s}\in\Lambda|\sigma_t=u)$ and
$\mathbb E_{t,u} (f(\sigma_s))=\int f(y)\mathbb P_{t,u}(s,dy)$.
Denote by $\mathcal A$ the weak infinitesimal operator of $\sigma_t$.
A function $f(\sigma_t,t)$ is said to be
in the domain of $\mathcal A$ if
\begin{align*}
	&\lim_{\Delta\to 0}\frac{\mathbb E_{t,u} (f(\sigma_{t+\Delta},t+\Delta))-f(u,t)}{\Delta}\\
	=&\lim_{\Delta\to 0}\frac{\mathbb E_{t,u} (f(\sigma_{t+\Delta},t+\Delta))-\mathbb E_{t,u} (f(\sigma_{t+\Delta},t))}{\Delta}
+\lim_{\Delta\to 0}\frac{\mathbb E_{t,u} (f(\sigma_{t+\Delta},t))-f(u,t)}{\Delta}\\
=&f_{t}(u,t)+h(u,t)
\end{align*}
and
\begin{align*}
	\lim_{\Delta\to 0} \mathbb E_{t,u}(f_t(\sigma_{t+\Delta},t+\Delta)+h(\sigma_{t+\Delta},t+\Delta))
=f_t(u,t)+h(u,t).
\end{align*}
Then, we write $\mathcal A f(u,t)=f_t(u,t)+h(u,t)$.

Note that for fixed $t$,
\begin{align*}
	\mathbb E_{t,u} (f(\sigma_{t+\Delta},t))=\int f(y,t)\mathbb P_{t,u}(\Delta,dy)
=\sum_{v}f(v,t)\mathbb P_{t,u}(\Delta,v)
=\sum_{v}f(v,t)q_{uv}\Delta+f(u,t).
\end{align*}
Hence, by the Dynkin's formula, we have
\begin{align}\label{infinitesimal}
\mathcal A f(u,t)=f_t(u,t)+\sum_{v}f(v,t)q_{uv}.
\end{align}

Throughout the paper, we assume $f(\cdot,t)$ belongs to the following function
class.

\begin{definition}

Function class QUAD $(G,\alpha\Gamma,\beta)$: let $G$ be an $n\times n$ positive definite matrix and $\Gamma$ be an $n\times n$ matrix. QUAD$ (G,\alpha\Gamma,\beta)$ denotes a class of
continuous functions $f(\xi,t):\mathbb{R}^{n}\times[0,+\infty)\mapsto
\mathbb{R}^{n}$ satisfying
\begin{eqnarray*}\label{f}
(\xi-\zeta)^{\top}G[f(\xi,t)-f(\zeta,t)-\alpha \Gamma (\xi-\zeta)]
\le -\beta(\xi-\zeta)^{\top}G(\xi-\zeta)
\end{eqnarray*}
holds for all $\xi, \zeta \in \mathbb{R}^n$.
\end{definition}
\begin{definition}
System (\ref{pinning_ds1},\ref{pinning_ds2}) is said to be exponentially stable at $s(t)$ in mean
square sense, if there exists constants $\delta>0$ and $M>0$, such that
\begin{eqnarray}
\mathbb{E}\bigg[\|x_{i}(t)-s(t)\|^2\bigg]\leq M e^{-\delta t}
\end{eqnarray}
holds for all $t>0$ and any $i=1,\cdots,m$.
\end{definition}

\section{Continuous monitoring}

We briefly provide the basic idea of the setup of the coupling and pinning
terms. Instead of using the simultaneous state
from the neighborhood and the target trajectory to realize stability, an economic
alternative for the node $i$ is to use the neighbors' constant
states at the nearest time point $t_k^i$ until some pre-defined
event is triggered at time $t_{k+1}^i$; if node $i$ is pinned at time $t$, it also obtains the target trajectory's constant state at time point $t_{k_i(t)}^i$; then the incoming neighbors' and the target trajectory's information is updated by the states at $t_{k+1}^i$ until the next event is triggered, and so on. The event is defined based on
the neighbors', the target trajectory's and its own states with some prescribed rule.
This process goes on through all nodes in a parallel fashion.
To depict the event that triggers the next
time point, we introduce following Lyapunov function:
\begin{eqnarray}\label{V}
V(\hat{x},t,\sigma_t)&=&\frac{1}{2}\hat{x}^{\top}\bigg(P(\sigma_t)\otimes G\bigg)\hat{x},
\end{eqnarray}
where $\hat{x}=[\hat{x}_{1}^{\top},\cdots,\hat{x}_{m}^{\top}]^{\top}$, $P(\sigma_t)\in\R^{m,m}$ are diagonal positive definite matrices, induced by $\sigma_{t}$, and
$G\in\R^{n,n}$ is a positive definite matrix.
Let $\hat{F}(\hat{x})=[(f(x_{i})-f(s))^{\top},\cdots,(f(x_{m})-f(s))^{\top}]^{\top}$,
$D(\sigma_t)=diag[D_i(\sigma_t)]_{i=1}^{m}$,
$\hat{L}(\sigma_t)=L(\sigma_t)+\epsilon D(\sigma_t)$.

Note that $V(\hat{x},t,\sigma_t)$ is in the domain of the weak infinitesimal
operator of $\sigma_t$. Denoting $\sigma_t=u$, by (\ref{infinitesimal}), we have
\begin{eqnarray}\label{derivative_of_V}
\mathcal A V(\hat{x},t,u) = \sum_{v=1}^{N}q_{uv}V(\hat{x},t,v)+(\frac{\partial
V(\hat{x},t,u)}{\partial \hat{x}})^{\top}\frac{d{\hat{x}}}{dt}.
\end{eqnarray}
Substitute (\ref{pinning_ds3}) into (\ref{derivative_of_V}), we get
\begin{eqnarray}\label{derivative_of_V1}
\mathcal A V(\hat{x},t,\sigma_t) \left|_{(\ref{pinning_ds3})}\right.&=& \hat{x}^{\top}(t)\left[P(\sigma_t)\otimes G\right]\left[\hat{F}(\hat{x}(t))-\alpha I_m\otimes\Gamma \hat{x}(t)+ cz(t)\right]\\
\nonumber&+&\hat{x}^{\top}(t)\left\{P(\sigma_t)\left[(\alpha I_{m}-c \hat{L}(\sigma_t))\otimes G\Gamma\right]+\frac{1}{2}\sum_{v}q_{\sigma_t v} P(v)\otimes G\right\}^{sym}\hat{x}(t),
\end{eqnarray}
where $z(t)=[z_1^{\top}(t),\cdots,z^{\top}_m(t)]^{\top}$ with
\begin{eqnarray}\label{z_i}
z_i(t)&=&\sum_{j}L_{ij}(\sigma_t)\Gamma
\left[x_j(t)-x_i(t)-x_j(t_{k_i(t)}^i)+x_i(t_{k_i(t)}^i)\right]\nonumber\\
&+&\epsilon D_i(\sigma_t)\left[x_i(t)-s(t)-x_i(t_{k_i(t)}^i)-s(t_{k_i(t)}^i)\right],
\end{eqnarray}
and $\{S\}^{sym}$ denotes the symmetry part of a square matrix $S$, i.e., $S^{sym}=(S+S^{\top})/2$.

Let $\lambda_m(\cdot)$ and $\lambda_M(\cdot)$ denote the smallest and largest eigenvalues in module of a symmetry real matrix, and $\underline{\lambda}=\min_v\lambda_m\left(P(v)\otimes G\right)$, $\bar{\lambda}=\max_v\lambda_M\left(P(v)\otimes G\right)$. From the condition $f\in QUAD(P,\alpha\Gamma,\beta)$, we have
\begin{eqnarray}\label{V_term1}
\hat{x}^{\top}(t)\left[P(\sigma_t)\otimes G\right]\left[\hat{F}(\hat{x}(t))-\alpha I_m\otimes\Gamma \hat{x}(t)\right]\le
-\beta\underline{\lambda}\hat{x}^{\top}(t)\hat{x}(t).
\end{eqnarray}
For the term $\hat{x}^{\top}(t)\left[P(\sigma_t)\otimes G\right]z(t)$, we have
\begin{eqnarray}\label{V_term2}
\hat{x}^{\top}(t)\left[P(\sigma_t)\otimes G\right]z(t)&\le &
\frac{\upsilon}{2}\hat{x}^{\top}(t)(P^2(\sigma_t)\otimes G^2)\hat{x}(t)+\frac{1}{2\upsilon} z^{\top}(t)z(t)\\
\nonumber&\leq& \frac{\upsilon\bar{\lambda}^2}{2}
\hat{x}^{\top}(t)\hat{x}(t)+\frac{1}{2\upsilon}z^{\top}(t)z(t)
\end{eqnarray}
holds for any $\upsilon>0$.
Then, we have the following theorem.
\begin{theorem}\label{thm1}
Suppose that $f$ belongs to $QUAD(G,\alpha\Gamma,\beta)$ with the positive matrix $G$ and $\alpha>0, \beta>0$, and there exist diagonal
positive definite matrices $P(u), u=\onetoN$ such that
\begin{eqnarray}\label{slow_condition}
\left\{P(u)[\alpha I_{m}- cL(u)-c\epsilon D(u)]\otimes G\Gamma\right\}^{sym}
+\frac{1}{2}\sum_{v=1}^{N}q_{uv}P(v)\otimes G\leq 0,~\rm{for~ all}~ u\in \mathbb{S}.
\end{eqnarray}
Then, under either of the following two updating rules,  system (\ref{pinning_ds1}) is exponentially stable at the homogeneous trajectory $s(t)$
in mean square sense:
\begin{enumerate}
\item[(1)]
set $t^{i}_{k+1}$ by the rule
\begin{eqnarray}
t_{k+1}^{i}=\max\left\{\tau\ge t_{k}^{i}:~\|z_i(\tau)\|\le\frac{(\beta\underline{\lambda}-\frac{1}{2}\delta\bar{\lambda})}
{\sqrt{c}\bar{\lambda}}\|\hat{x}_i(\tau)\|\right\}
\label{event1}
\end{eqnarray}
where $0<\delta\le{2\beta\underline{\lambda}}/{\bar{\lambda}}$ is a constant;
\item[(2)]
set $t^{i}_{k+1}$ by the rule
\begin{eqnarray}
t_{k+1}^{i}=\max\left\{\tau\ge t_{k}^{i}:~\|z_i(\tau)\|\le a\exp{(-b\tau)}\right\}
\label{event2}
\end{eqnarray}
where $a>0$ and $b>0$ are constants.
\end{enumerate}
\end{theorem}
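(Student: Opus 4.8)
The plan is to convert the feasibility condition~(\ref{slow_condition}) and the triggering rules into a Lyapunov differential inequality for the weak infinitesimal operator $\mathcal A$ applied to $V(\hat x,t,\sigma_t)$ of~(\ref{V}), of the form $\mathcal A V\le -\delta V$ (plus an exponentially vanishing forcing term under rule~(2)), and then to pass to the mean square via Dynkin's formula. First I would substitute~(\ref{slow_condition}) into~(\ref{derivative_of_V1}): since $\hat L(\sigma_t)=L(\sigma_t)+\epsilon D(\sigma_t)$, the matrix acting on $\hat x$ in the second line of~(\ref{derivative_of_V1}) is exactly the left-hand side of~(\ref{slow_condition}), so that line is nonpositive and
\[ \mathcal A V(\hat x,t,\sigma_t)\le\hat x^{\top}(t)\,[P(\sigma_t)\otimes G]\,\big[\hat F(\hat x(t))-\alpha\,(I_m\otimes\Gamma)\hat x(t)+cz(t)\big]. \]
Bounding the first two terms by the QUAD inequality~(\ref{V_term1}) and the term $c\hat x^{\top}[P(\sigma_t)\otimes G]z(t)$ by the Young-type inequality~(\ref{V_term2}) (with a parameter $\upsilon>0$ still at our disposal) gives
\[ \mathcal A V(\hat x,t,\sigma_t)\le\big(-\beta\underline\lambda+\tfrac{c\upsilon\bar\lambda^{2}}{2}\big)\|\hat x(t)\|^{2}+\tfrac{c}{2\upsilon}\|z(t)\|^{2}. \]

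Next I would use the event rule to absorb $\|z(t)\|^{2}$. Under rule~(1), node $i$ resamples precisely before $\|z_i(\tau)\|$ would exceed $\tfrac{\beta\underline\lambda-\delta\bar\lambda/2}{\sqrt c\,\bar\lambda}\|\hat x_i(\tau)\|$, and $z_i$ vanishes at each of its event times; hence $\|z_i(t)\|\le\tfrac{\beta\underline\lambda-\delta\bar\lambda/2}{\sqrt c\,\bar\lambda}\|\hat x_i(t)\|$ for every $t\ge0$ and every $i$, so summing squares over $i$ gives $\|z(t)\|^{2}\le\tfrac{(\beta\underline\lambda-\delta\bar\lambda/2)^{2}}{c\,\bar\lambda^{2}}\|\hat x(t)\|^{2}$. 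Inserting this and choosing $\upsilon=\tfrac{\beta\underline\lambda-\delta\bar\lambda/2}{\sqrt c\,\bar\lambda^{2}}$ (the value balancing the two $\|\hat x\|^{2}$-coefficients) reduces the estimate, using $0<\delta\le2\beta\underline\lambda/\bar\lambda$, to $\mathcal A V\le-\tfrac{\delta\bar\lambda}{2}\|\hat x\|^{2}\le-\delta V$, the last step by $V\le\tfrac{\bar\lambda}{2}\|\hat x\|^{2}$. Under rule~(2), instead $\|z_i(t)\|\le a e^{-bt}$ for all $t$, so $\|z(t)\|^{2}\le m a^{2}e^{-2bt}$; with $\upsilon=\tfrac{\beta\underline\lambda}{c\,\bar\lambda^{2}}$ one gets $\mathcal A V\le-\tfrac{\beta\underline\lambda}{2}\|\hat x\|^{2}+Ce^{-2bt}\le-\gamma V+Ce^{-2bt}$ with $\gamma=\beta\underline\lambda/\bar\lambda$ and $C=\tfrac{c^{2}m a^{2}\bar\lambda^{2}}{2\beta\underline\lambda}$.

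To reach mean-square exponential stability I would invoke Dynkin's formula for $\mathcal A$. For rule~(1) it yields $g(t):=\mathbb E[V(\hat x(t),t,\sigma_t)]\le g(0)-\delta\int_0^t g(s)\,ds$, hence $g(t)\le g(0)e^{-\delta t}$ by Gronwall; for rule~(2), applying $\mathcal A$ to $e^{\gamma t}V$ and then Dynkin gives $\mathbb E[e^{\gamma t}V(t)]\le V(\hat x(0),0,\sigma_0)+C\int_0^t e^{(\gamma-2b)s}\,ds$, so $g(t)\le M e^{-\min\{\gamma,2b\}t}$ (with an extra polynomial factor, hence any slightly smaller rate, when $\gamma=2b$). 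Since $V(\hat x,t,u)\ge\tfrac{\underline\lambda}{2}\|\hat x\|^{2}\ge\tfrac{\underline\lambda}{2}\|\hat x_i\|^{2}$, we obtain $\mathbb E\|x_i(t)-s(t)\|^{2}\le\tfrac{2}{\underline\lambda}g(t)\le M'e^{-\delta t}$ (respectively at rate $\min\{\gamma,2b\}$), which is the asserted stability.

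The main obstacle is not this Lyapunov estimate but the well-posedness hidden in the step ``the trigger inequality holds for all $t$'': one has to verify that, starting from $z_i(t^i_k)=0$, a genuine positive inter-event time $t^i_{k+1}>t^i_k$ always exists (continuity of $z_i$ and $\hat x_i$, and the degenerate case $\hat x_i(t^i_k)=0$), and that for rule~(1) the sequence $\{t^i_k\}_k$ does not accumulate at a finite time, so that the closed loop is defined on all of $[0,\infty)$; in the proof this is handled by arguing on $[0,T^{*})$ up to any putative accumulation point $T^{*}$ and letting $T^{*}\uparrow\infty$. (This is also why Zeno-freeness can only be claimed in restricted cases: under rule~(2) the threshold never collapses, but under rule~(1) it shrinks as $\hat x_i\to0$.) A secondary, purely mechanical point is the constant chasing around the choice of $\upsilon$ and confirming that the effective decay rate is positive for every admissible $\delta$.
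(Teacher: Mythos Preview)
Your proposal is correct and follows essentially the same route as the paper: kill the matrix block in~(\ref{derivative_of_V1}) via~(\ref{slow_condition}), bound the remainder by QUAD plus the Young inequality~(\ref{V_term2}) with a free $\upsilon$, absorb $\|z\|^{2}$ via the trigger rule, optimize over $\upsilon$, and pass to expectations through Dynkin's formula. The only cosmetic difference is that the paper applies Dynkin directly to $e^{\delta t}V$ in both cases (obtaining the integral inequality~(\ref{dv}) and showing the integrand is nonpositive for rule~(1), respectively bounded by $\frac{c}{2\upsilon}ma^{2}e^{-2b\tau}$ for rule~(2)), whereas you first derive the pointwise bound $\mathcal A V\le -\delta V$ (resp.\ $\mathcal A V\le -\gamma V+Ce^{-2bt}$) and then integrate; these are equivalent, and your Zeno/well-posedness caveat is handled separately in the paper's Proposition~\ref{Zeno1}.
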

\begin{proof}

[Case (1).] Consider the event-triggering rule (\ref{event1}) and
pick a constant $\delta$ with $0<\delta\le{2\beta\underline{\lambda}}/{\bar{\lambda}}$. By Dynkin Formula \cite{Mao06}, we have
\begin{eqnarray}
\mathbb{E}e^{\delta t}V(\hat{x},t,\sigma_{t})
=\mathbb{E}V(\hat{x}(0),0,\sigma_0)+\delta\mathbb{E}\int_{0}^{t}e^{\delta \tau}V(\hat{x},\tau,\sigma_{\tau})d\tau
+\mathbb{E}\int_{0}^{t}e^{\delta \tau}\mathcal{A}V(\hat{x},\tau,\sigma_{\tau})d\tau.
\end{eqnarray}
From (\ref{derivative_of_V1})-(\ref{slow_condition}), we have
\begin{eqnarray}\label{dv}
\nonumber\mathbb{E}e^{\delta t}V(\hat{x},t,\sigma_{t})
\nonumber&\le& \mathbb{E}V(\hat{x}(0),0,\sigma_0)+\delta\mathbb{E}\int_{0}^{t}e^{\delta \tau}V(\hat{x},\tau,\sigma_{\tau})d\tau
-\beta\underline{\lambda}\mathbb{E}\int_{0}^{t}e^{\delta \tau}\hat{x}^{\top}(\tau)\hat{x}(\tau)d\tau\\
\nonumber&&+ \frac{c\upsilon\bar{\lambda}^2}{2}\mathbb{E}\int_{0}^{t}e^{\delta \tau}\hat{x}^{\top}(\tau)\hat{x}(\tau)d\tau+\frac{c}{2\upsilon}\mathbb{E}\int_{0}^{t}e^{\delta \tau}z^{\top}(\tau)z(\tau)d\tau\\
\nonumber&&+\mathbb{E}\int_{0}^{t}e^{\delta \tau}\hat{x}^{\top}(\tau)\left\{P(\sigma_{\tau})\left[(\alpha I_{m}-c \hat{L}(\sigma_{\tau}))\right]\otimes G\Gamma+\frac{1}{2}\sum_{v}q_{\sigma_{\tau} v} P(v)\otimes G\right\}^{sym}\hat{x}(\tau)d\tau\\
&\le& \mathbb{E}V(\hat{x}(0),0,\sigma_0)+\mathbb{E}\int_{0}^{t}e^{\delta \tau}\left\{\left[
-\beta\underline{\lambda}+\frac{\delta\bar{\lambda}}{2}+\frac{c\upsilon\bar{\lambda}^2}{2}\right]
\hat{x}^{\top}(\tau)\hat{x}(\tau)+\frac{c}{2\upsilon}z^{\top}(\tau)z(\tau)\right\}d\tau
\end{eqnarray}
for any $\upsilon>0$.
Note that
\begin{eqnarray*}
\max_{\upsilon>0}\frac{2\upsilon}{c}\left[\beta\underline{\lambda}-\frac{
\delta\bar{\lambda}}{2}-\frac{c\upsilon\bar{\lambda}^2}{2}\right]=
\frac{(\beta\underline{\lambda}-\frac{1}{2}\delta\bar{\lambda})^2}
{c\bar{\lambda}^2}
\end{eqnarray*}
and the maximum is reached if and only if $\upsilon=\frac{(\beta\underline{\lambda}-\frac{1}{2}\delta\bar{\lambda})}
{\sqrt{c}\bar{\lambda}}$. Hence, letting $\upsilon=\frac{(\beta\underline{\lambda}-\frac{1}{2}\delta\bar{\lambda})}
{\sqrt{c}\bar{\lambda}}$, (\ref{event1}) implies
\begin{eqnarray*}
\|z_i(\tau)\|^2\leq \frac{2\upsilon}{c}\left[\beta\underline{\lambda}-\frac{
\delta\bar{\lambda}}{2}-\frac{c\upsilon\bar{\lambda}^2}{2}\right]\|\hat{x}_i(\tau)\|^2,~~~
i=1,\cdots,m,
\end{eqnarray*}
for all $\tau\le t^{i}_{k+1}$. Therefore, we have
\begin{eqnarray}\label{exp_step1}
\mathbb{E}e^{\delta t}V(\hat{x},t,\sigma_{t})\le \mathbb{E}V(\hat{x}(0),0,\sigma_{0}),
\end{eqnarray}
which implies
\begin{eqnarray}\label{exp_step2}
\mathbb{E}e^{\delta t}\|x_{j}(t)-s(t)\|^{2}
\le \frac{2}{\underline{\lambda}}\mathbb{E}e^{\delta t}V(\hat{x},t,\sigma_{t})
\le \frac{2}{\underline{\lambda}}\mathbb{E}V(\hat{x}(0),0,\sigma_{0}),~~~\forall j=\onetom.
\end{eqnarray}
Therefore, we have
\begin{eqnarray}\label{exp_step3}
\mathbb{E}\|x_{j}(t)-s(t)\|^{2}\le \frac{2e^{-\delta t}}{\underline{\lambda}}\mathbb{E}V(\hat{x}(0),0,\sigma_{0}),~~~~\forall j=\onetom.
\end{eqnarray}

[Case (2).] Consider the event-triggering rule (\ref{event2}) and pick $\upsilon=\frac{2\beta\underline{\lambda}-\delta\bar{\lambda}}{c\bar{\lambda}^2}$. Then, we have
\begin{eqnarray*}
-\beta\underline{\lambda}+\frac{
\delta\bar{\lambda}}{2}+\frac{c\upsilon\bar{\lambda}^2}{2}=0.
\end{eqnarray*}
Substituting $\|z_{i}(\tau)\|\le a\exp(-b\tau)$ into (\ref{dv}) gives
\begin{eqnarray*}
\mathbb{E}e^{\delta t}V(\hat{x},t,\sigma_{t})&\le & \mathbb{E}V(\hat{x}(0),0,\sigma_{0})+\frac{a^2c^2\bar{\lambda}^2}
{2(2\beta\underline{\lambda}-\delta\bar{\lambda})}\mathbb{E}\int_{0}^{t}e^{(-2b+\delta) \tau}d\tau\\
&=&\mathbb{E}V(\hat{x}(0),0,\sigma_{0})+\frac{a^2c^2\bar{\lambda}^2}
{2(2\beta\underline{\lambda}-\delta\bar{\lambda})}\frac{1}{2b-\delta}
\left[1-e^{(-2b+\delta)t}\right].
\end{eqnarray*}
Let $C_0=\frac{a^2c^2\bar{\lambda}^2}
{2(2\beta\underline{\lambda}-\delta\bar{\lambda})}\frac{1}{2b-\delta}$. By the similar arguments as (\ref{exp_step1})-(\ref{exp_step3}), we have
\begin{eqnarray*}
\mathbb{E}\|x_{j}(t)-s(t)\|^{2}\le \frac{2e^{-\delta t}}{\underline{\lambda}}\left(\mathbb{E}V(\hat{x}(0),0,\sigma_{0})+
C_0\left[1-e^{(-2b+\delta)t}\right]\right)
\le  C_1e^{-\min(2b,\delta)t},~~~\forall j=\onetom,
\end{eqnarray*}
where $C_1=\frac{2}{\underline{\lambda}}\max(|\mathbb{E}V(\hat{x}(0),0,\sigma_{0})+C_0|,|C_0|)$. This completes the proof.
\end{proof}

\begin{remark}
If $\hat{x}(t_1)=0$ holds, then from (\ref{event1}), at time $t_1$, every node updated its feedback term and pinning term (if pinned). Therefore, from (\ref{pinning_ds1},\ref{pinning_ds2}), for any $t\ge t_1$, $\hat{x}(t)=0$ holds, which means $\hat{x}=0$ is an equilibrium of the system under the event-triggering rules.
\end{remark}

\begin{remark}
In fact, in our previous work \cite{han}, we studied pinning dynamic systems of networks with Markovian switching couplings and controller-node set.
By this theorem, we showed and proved that if the condition for the stability of the coupled  system with spontaneous coupling and control in \cite{han} can be satisfied, then the event-trigger strategies can stabilize the system too. Therefore, the issue of selection of pinned node in terms of guaranteeing stability is totally the same with the system with spontaneous diffusion and control.
\end{remark}



Under the updating rule (\ref{event2}), it can be proved that the Zeno behaviors \cite{Joh} is excluded by the arguments as the same fashion as in \cite{Alderisio}. While for the updating rule (\ref{event1}), similar to work \cite{Dimarogonas}, it should be pointed out that there exists at least one node such that its next inter-event interval is strictly positive.
\begin{proposition}\label{Zeno1}
Suppose that all hypotheses of Theorem \ref{thm1} hold. Under the updating rule (\ref{event1}), if the system does not reach stability, then there exists at least one node $i\in\{1,\cdots,m\}$ such that the next inter-event interval is strictly positive; Under the updating rule (\ref{event2}), if the system does not reach stability, the expectation of next inter-event interval of every node is strictly positive, further, it is lower bounded by some positive constant.
\end{proposition}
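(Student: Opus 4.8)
The plan is to treat the two updating rules separately, in both cases exploiting the elementary fact that $z_i$ is reset to zero at every event time of node $i$ --- indeed, at $\bar t=t^i_k$ one has $t^i_{k_i(\bar t)}=\bar t$, so every bracket in (\ref{z_i}) vanishes and $z_i(\bar t)=0$, whatever the current Markov state --- while the triggering threshold stays strictly positive for as long as the target has not been reached.

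For rule (\ref{event1}), write the threshold as $\kappa\|\hat x_i(\tau)\|$ with $\kappa=(\beta\underline\lambda-\tfrac12\delta\bar\lambda)/(\sqrt c\,\bar\lambda)>0$. If the system has not reached stability, then $\hat x(\bar t)\ne 0$ at every time $\bar t$, so either some node has no event at $\bar t$ --- and then its current inter-event interval, which extends strictly past $\bar t$, has positive length --- or every node has an event at $\bar t$, in which case $\|\hat x_{i^\star}(\bar t)\|>0$ for at least one $i^\star$ (otherwise $\hat x(\bar t)=0$). For such an $i^\star$ one has $z_{i^\star}(\bar t)=0$ while $\kappa\|\hat x_{i^\star}(\bar t)\|>0$, and both $\tau\mapsto z_{i^\star}(\tau)$ and $\tau\mapsto\|\hat x_{i^\star}(\tau)\|$ are continuous on $[\bar t,\bar t+\rho)$, where $\rho>0$ is the residual sojourn time of $\sigma$ from $\bar t$; hence there is $\eta>0$ with $\|z_{i^\star}(\tau)\|<\kappa\|\hat x_{i^\star}(\tau)\|$ on $[\bar t,\bar t+\eta]$, which forces $t^{i^\star}_{k+1}>\bar t$. (A Markov jump before $\bar t+\eta$ destroys the continuity of $z_{i^\star}$ and may end the interval at that jump time, but that time is still strictly later than $\bar t$, so the conclusion stands.) This proves the first assertion.

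For rule (\ref{event2}) the threshold $a\,e^{-b\tau}$ is strictly positive and independent of $\hat x$, so the same continuity argument already gives a strictly positive next interval for \emph{every} node along any sample path with no Markov jump; it remains to quantify this and to account for jumps. Between consecutive Markov jumps the term $\theta_i$ in (\ref{pinning_ds2}) is constant, so $\dot z_i(\tau)=\sum_j L_{ij}(\sigma_\tau)\Gamma[\dot x_j-\dot x_i]+\epsilon D_i(\sigma_\tau)[\dot x_i-\dot s]$ with each $\dot x_\ell=f(x_\ell)+\theta_\ell$; using that the trajectories remain bounded (by the stability of Theorem \ref{thm1}), the local Lipschitz regularity of $f$, and $\|\theta_\ell\|\le c\|\Gamma\|\sum_j|L_{\ell j}|\,\|\hat x_j(t^\ell_{k_\ell})-\hat x_\ell(t^\ell_{k_\ell})\|+c\epsilon\|\hat x_\ell(t^\ell_{k_\ell})\|$, one obtains $\|\dot z_i(\tau)\|\le B$ for a constant $B$, and hence $\|z_i(t^i_k+h)\|\le Bh$ as long as no jump occurs. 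Since $Bh<a\,e^{-b(t^i_k+h)}$ whenever $h<\eta:=\min\{1/b,\ \tfrac{a}{eB}e^{-bt^i_k}\}$, no event of node $i$ can occur on $(t^i_k,t^i_k+\eta]$ in the absence of a jump. By the strong Markov property the waiting time $\rho$ to the first jump after $t^i_k$ is exponential with parameter at most $q_{\max}:=\max_u q_u$, so $t^i_{k+1}-t^i_k$ stochastically dominates $\min(\eta,\rho)$, whence $\mathbb E[t^i_{k+1}-t^i_k]\ge\int_0^\eta e^{-q_{\max}t}\,dt=(1-e^{-q_{\max}\eta})/q_{\max}>0$. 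Finally, replacing the crude constant $B$ by a bound proportional to $e^{-b\tau}$ --- which is legitimate via the mean-square estimate $\mathbb E\|\hat x(t)\|^2\le C_1 e^{-\min(2b,\delta)t}$ of Theorem \ref{thm1} and Markov's inequality, provided $\delta\ge 2b$, an arrangement always possible --- turns $\eta$ into a quantity bounded below by a constant independent of $k$, so the expected inter-event interval is bounded below by a positive constant.

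The main obstacle throughout is the Markovian switching: it destroys the continuity of $z_i$ and can in principle force a fresh event immediately after a jump, which is exactly why for rule (\ref{event2}) one obtains only a statement in expectation, and why for rule (\ref{event1}) one settles for ``at least one node''. The most delicate point is the \emph{uniform} positive lower bound in the second case, where the exponentially decaying threshold $a\,e^{-b\tau}$ must be balanced against the decay of the trajectories, so that mere boundedness does not suffice and the quantitative rate from Theorem \ref{thm1} is genuinely needed. A minor point is that QUAD by itself does not bound $\|f(\xi)-f(\zeta)\|$ from above, so the derivative estimate for $z_i$ relies on the local Lipschitz continuity of $f$ on the region where the bounded trajectories lie.
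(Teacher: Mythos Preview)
Your treatment of rule~(\ref{event1}) is correct and essentially the paper's own: either some node is strictly inside its triggering inequality at time $\bar t$, or every node is triggered at $\bar t$, whence every $z_i(\bar t)=0$ while some $\hat x_{i^\star}(\bar t)\ne0$, and continuity on the current Markov sojourn interval places the next event of $i^\star$ strictly after $\bar t$. The strict positivity under rule~(\ref{event2}) is likewise correct and in the same spirit as the paper's sketch.

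The gap is in your argument for the \emph{uniform} lower bound under rule~(\ref{event2}). You correctly observe that the constant bound $\|\dot z_i\|\le B$ gives only an $\eta$ of order $e^{-bt^i_k}$, which vanishes as $k\to\infty$, and you propose to replace $B$ by something proportional to $e^{-b\tau}$ using the mean-square decay $\mathbb E\|\hat x(t)\|^2\le C_1e^{-\min(2b,\delta)t}$ together with Markov's inequality. This step does not go through: Markov's inequality yields only a tail probability, not the pathwise bound $\|\hat x(\tau)\|\le Ce^{-b\tau}$ that your estimate of $\|\dot z_i(\tau)\|$ would need. Since the next event time is a pathwise first-hitting time of $\|z_i\|$ against the threshold, one needs a pathwise upper bound on $\|z_i\|$, or else a considerably more careful stochastic argument linking the hitting time to moments of $\hat x$; a moment bound alone does not suffice. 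A minor additional issue: ``$\delta\ge 2b$ is always possible'' is not right, since $\delta\le 2\beta\underline\lambda/\bar\lambda$ forces $b\le\beta\underline\lambda/\bar\lambda$; the paper, too, imposes $b<\delta/2$, but as a hypothesis rather than something one can always arrange.

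The paper does not give a full proof either, but its sketch (following \cite{Alderisio}) produces the explicit uniform bound $\frac{1}{b}\log\bigl(1+\frac{1}{A+B}\bigr)$, with constants depending only on $L_f$, $c$, $m$, $\epsilon$, $a$, $b$; the underlying device is to bound the growth of $\|z_i(\tau)\|$ relative to the moving threshold $ae^{-b\tau}$ rather than absolutely, so the resulting inequality does not degenerate as $t^i_k\to\infty$.
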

The proof is the similar to those in \cite{Alderisio,Dimarogonas} with some modifications. In fact, if at any time $t$, there exists one node $i$ such that (\ref{event1}) cannot hold as an equality for this node $i$. Hence, $t_{k+1}^{i}>t$ can be derived, which implies that the inter-event interval for node $i$ is positive. Otherwise, at time $t$, (\ref{event1}) holds as an equality for every node, that is, all nodes update their control law at this moment, which implies $z_i(t)=0$ holds for all $i$. However, since the network has not been stabilized at $s(t)$ yet, there exists at least node $j$ with $\hat{x}_{j}(t)\neq 0$, which implies $t^{j}_{k+1}>t$ holds, which implies that the next inter-event interval for node $j$ should be positive.

In comparison, under the rule (\ref{event2}), suppose $b<\frac{\delta}{2}$, for each node $i$, at $\tau=t^{i}_{k}$, we have $z_{i}(t^i_k)=0$ but the right-hand side of (\ref{event2}) is nonzero. Therefore, $t^{i}_{k+1}>t^{i}_{k}$ always holds and the low bound of the expectation of inter-event intervals can be estimated as
$\frac{1}{b}\log{\{1+\frac{1}{A+B}\}}$ with $L_f$ the Lipschtiz constant of $f(\cdot)$, $A = \frac{2mL_f+2c m(m+\epsilon)+L_f+cm}{ab}$ and $B =\frac{2m+1}{b}$.

\section{Discrete monitoring}

In the discrete monitoring scenario, each node $i$ can obtain its local neighborhood's state only at the time points $t^{i}_{k}$, $k=1,2,\cdots$. Meanwhile, if node $i$ is pinned, it can also obtain the target's state at latest time points $t_{k_i(t)}^i$. By this way, the rule to determine the next time point $t^{i}_{k+1}$ of obtaining state information only depends on the local states at $t^{i}_{k}$. In comparison, the triggering event rules (\ref{event1}) and (\ref{event2}) demand the instantaneous states after $t^{i}_{k}$.

Consider system (\ref{pinning_ds1},\ref{pinning_ds2}) and $V(x)$ as the candidate Lyapunov function with its derivative (\ref{derivative_of_V1}). We are to derive a triggering event rule from (\ref{event1}) in Theorem \ref{thm1}, which only depends on $t_{k}^{i}$. The estimations of the upper bounds of $\|z_i(t)\|$ and the lower bounds of $\|x_i(t)-s(t)\|$ for all $i$ are essential.

First of all, we take the switching time points of the Markov chain $\sigma_{t}$ to trigger the  state information updating for all nodes. Then, we are to estimate the upper-bound of $\|(x_{j}(t)-x_{j}(t_{k}^{i}))-(x_{i}(t)-x_{i}(t_{k}^{i}))\|$ with $L_{ij}(\sigma_t)\neq 0$, of which the evolution equation can be written as:
\begin{eqnarray}\label{evolution1}
\begin{cases}\frac{d{x}_{i}(t)}{dt}=f(x_{i}(t))+
\theta_{i}(\sigma_t,t_{k}^i)&\\
\frac{d{x}_{j}(t)}{dt}=f(x_{j}(t))+\theta_{j}(\sigma_t,t_{k_j(t)}^j)
&
\end{cases}
\end{eqnarray}
for $t^{i}_{k}\le t<\min\{t^{i}_{k+1},t^{j}_{k_{j}(t)+1}\} $ and initials $x_{i}(t^{i}_{k}), x_{j}(t^{i}_{k})$. Here, $\theta_{i}(\sigma_t,t_{k}^i)$ and $\theta_{j}(\sigma_t,t_{k_j(t)}^j)$ are constants in this time interval.

Let us consider a general form of  (\ref{evolution1}) as follows:
\begin{eqnarray}
\begin{cases}\frac{du}{dt}=f(u(t))+\theta&u(0)=u_{0}\\
\frac{dv}{dt}=f(v(t))+\vartheta&v(0)=v_{0},
\end{cases}\label{r2}
\end{eqnarray}
where $u,v,u_{0},v_{0}\in\R^{n}$.
Suppose that there exists a nonnegative-valued continuous map $\rho:\R_{\ge 0}\times\R^{4n}\to\R_{\ge 0}$ such that the solutions of (\ref{r2}) satisfy the following inequality:
\begin{eqnarray}
\|(u(t)-u_{0})-(v(t)-v_{0})\|\le\rho(t,\theta,\vartheta,u_{0},v_{0}).\label{r1}
\end{eqnarray}
Here the map $\rho$ depends on the node dynamics map $f(\cdot)$, the initial value $u_{0},v_{0}$ and inputs $\theta,\vartheta$, and satisfies  $\rho(0,\cdot,\cdot,\cdot,\cdot)\equiv 0$. Geometrically, $\rho$ is an upper-bound estimation of the difference between the two trajectories of (\ref{r2}) starting at $0$ of time-length $t$:
\begin{align*}
\|(u(t)-u_{0})-(v(t)-v_{0})\|
=\left\|\int_{0}^{t}[f(u(s))-f(v(s))]d s+(\theta-\vartheta) t\right\|.
\end{align*}

For example, if $f(\cdot)$ is Lipschitz (on the two trajectories): $\|f(u(s))-f(v(s))\|\le L_{f}\|u(s)-v(s)\|$ for all $s\ge 0$, then we have
\begin{align*}
\|(u(t)-u_{0})-(v(t)-v_{0})\|
\le &L_{f}\int_{0}^{t}\|(u(s)-u_{0})-(v(s)-v_{0})\|ds\\
&+(\|\theta-\vartheta\|+L_f\|u_{0}-v_{0}\|)~t.
\end{align*}
By the Gronwall-Bellman inequality \cite{Gron,Bell}, we have
\begin{align}
\label{Lips}
\|(u(t)-u_{0})-(v(t)-v_{0})\|\
\le\frac{(\|\theta-\vartheta\|+L_f\|u_{0}-v_{0}\|)}{L_{f}}[\exp(L_{f}t)-1].
\end{align}
We can take $\rho(t,\theta,\vartheta,u_{0},v_{0})$ as the right-hand side above, which equals to zero at $t=0$.

Second, we suppose that there exists a nonnegative map $\varrho:\R_{\ge 0}\times\R^{4n}\to\R_{\ge 0}$ such that the solutions of (\ref{r2}) satisfy:
\begin{eqnarray}
\|u(t)-v(t)\|\geq\varrho(t,\theta,\vartheta,u_{0},v_{0}).
\end{eqnarray}
Here, $\varrho$ can be regarded as the lower-bound estimation of the distance between two trajectories:
\begin{align*}
\|u(t)-v(t)\|
=\left\|\int_{0}^{t}[f(u(s))-f(v(s))]ds
+(\theta-\vartheta)t
+(u_0-v_0)\right\|
\end{align*}
and satisfies (i). $\varrho(\cdot,\theta,\theta,u_{0},u_{0})\equiv 0$; (ii). $\varrho(0,\cdot,\cdot,u_{0},u_{0})\equiv 0$.
For example, assuming that there exists some constant $\sigma$ (possibly negative) such that
\begin{eqnarray*}
(u-v)^{\top}(f(u)-f(v))\ge\sigma (u-v)^{\top}(u-v)
\end{eqnarray*}
holds for all $u,v\in\mathbb R^{n}$.
We have
\begin{align*}
&\frac{d}{dt}[(u(t)-v(t))^{\top}(u(t)-v(t))]\left|_{(\ref{r2})}\right.
=2(u-v)^{\top}[f(u)-f(v)+\theta-\vartheta]\\
&\ge{2\sigma}(u-v)^{\top}(u-v)-\mu (u-v)^{\top}(u-v)
-\frac{1}{\mu}(\theta-\vartheta)^{\top}(\theta-\vartheta)
\end{align*}
hold for any $\mu>0$.
By Gronwall-Bellman inequality, we have
\begin{align*}
(u(t)-v(t))^{\top}(u(t)-v(t))
\ge&\exp{[({2\sigma}-\mu)t]}(u_{0}-v_{0})^{\top}(u_{0}-v_{0})\\
&-\frac{(\theta-\vartheta)^{\top}
(\theta-\vartheta)/\mu}{2\sigma-\mu}\bigg\{\exp[({2\sigma}-\mu)t]-1\bigg\}.
\end{align*}
We take $\varrho$ as the right-hand side above, which is positive for a small interval of $t$, starting from $0$, for any $u_{0}\ne v_{0}$.

We highlight that there is no uniform approach to get precise estimation for a general function of $f(\cdot)$ but one can do it case by case. Therefore, an efficient way is to use integrators that simulates the node dynamics of $\dot{u}=f(u)+\theta$ to realize generators that calculate the maps of $\rho$ and $\varrho$. Noting that these generators are independent of the states of the nodes, they can be built parallel to the networked system. Figures \ref{rho}  and \ref{varrho} show the generators of $\rho$ and $\varrho$ respectively.

\begin{figure}[!t]
\begin{center}
\includegraphics[height=.4\textwidth,width=.45\textwidth]{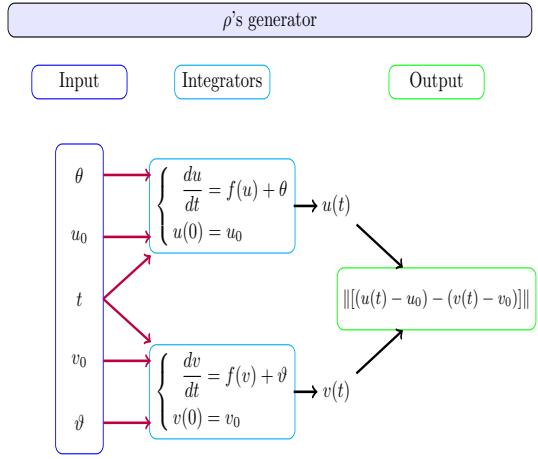}
\caption{$\rho$ generator.} \label{rho}
\end{center}
\end{figure}

\begin{figure}[!t]
\begin{center}
\includegraphics[height=.4\textwidth,width=.45\textwidth]{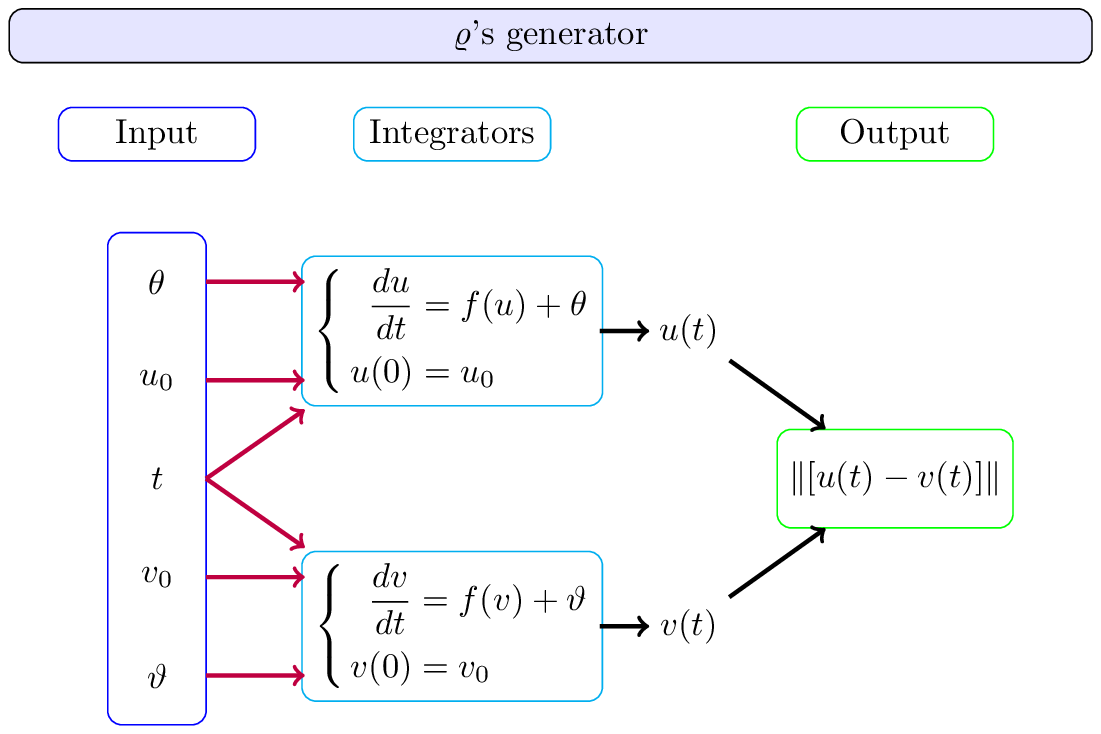}
\caption{$\varrho$ generator.} \label{varrho}
\end{center}
\end{figure}

Let $\vartheta^{i}_{k} = \theta_i(\sigma_t,t_k^i)$ and $\vartheta_{k_{j}(t)}^{j} = \theta_j(\sigma_t,t_{k_j(t)}(t))$.
Based on the event-triggering rules (\ref{event1}), (\ref{event2}), we have the following theorem.
\begin{theorem}\label{thm2}
Suppose that $f$ belongs to $QUAD(G,\alpha\Gamma,\beta)$ with positive matrix $G$ and $\alpha, \beta>0$. Suppose there exist diagonal
positive definite matrices $P(u), u=\onetoN$ such that
\begin{eqnarray*}
\left\{P(u)[\alpha I_{m}- cL(u)-c\epsilon D(u)]\otimes G\Gamma\right\}^{sym}
+\frac{1}{2}\sum_{v=1}^{N}q_{uv}P(v)\otimes G\leq 0,~\rm{for~ all}~ u\in \mathbb{S}.
\end{eqnarray*}
define a sequence of $\xi^{i}_{k}$ under either of the following two updating rules,
\begin{enumerate}
\item[(1)]
\begin{eqnarray}
\xi^{i}_{k}=\max\bigg\{&\xi:&~\sum_{j\ne i}(-L_{ij}(\sigma_{\xi+t_k^i}))\rho\left(\xi,{\vartheta}^{i}_k,
{\vartheta}^{j}_{k_j(\xi+t_k^i)},x_{i}(t^{i}_{k}),x_{j}(t^{i}_{k})\right)+\nonumber\\
&&+\epsilon D_i(\sigma_{\xi+t_k^i})\rho\left(\xi,{\vartheta}^{i}_k,
0,x_{i}(t^{i}_{k}),s(t^{i}_{k})\right)\nonumber\\
&&\le\frac{(\beta\underline{\lambda}-\frac{1}{2}\delta\bar{\lambda})}
{\sqrt{c}\bar{\lambda}}\varrho\left(\xi,\vartheta^{i}_k,
0,x_{i}(t^{i}_{k}),s(t^{i}_{k})\right)\bigg\}\label{event3}
\end{eqnarray}
where $0<\delta\le{2\beta\underline{\lambda}}/{\bar{\lambda}}$ is a constant;
\item[(2)]
\begin{eqnarray}
\xi^{i}_{k}=\max\bigg\{&\xi:&~\sum_{j\ne i}(-L_{ij}(\sigma_{\xi+t_k^i}))\rho\left(\xi,{\vartheta}^{i}_k,
{\vartheta}^{j}_{k_j(\xi+t_k^i)},x_{i}(t^{i}_{k}),x_{j}(t^{i}_{k})\right)+\nonumber\\
&&+\epsilon D_i(\sigma_{\xi+t_k^i})\rho\left(\xi,{\vartheta}^{i}_k,
0,x_{i}(t^{i}_{k}),s(t^{i}_{k})\right)\nonumber\\
&&\le a\exp{(-b(\xi+t_k^i))}\bigg\}\label{event4}
\end{eqnarray}
where $a>0$ and $b>0$ are constants.
\end{enumerate}
If the triggering event time points $\{t^{i}_{k}\}$ are picked by the following scheme:
\begin{enumerate}
\item Initialization: $t_{0}^{i}=0$ for all $i=\onetom$;
\item At $t=t^{i}_{k}$, node $i$ obtains $\xi^{i}_{k}$ by the rule (\ref{event3})(or (\ref{event4}));
\item 
At $t>t^{i}_{k}$,  if one of its neighbor, for example, denoted by $j$, is triggered at $t=t^{j}_{k'+1}$ (let $k'$ be the latest event at node $j$ before $t$), then $j$ broadcasts its current updating law, $\vartheta^{j}_{k'}$, to node $i$, and the rule (\ref{event3})(or (\ref{event4})) is updated by replacing the diffusion term from node $j$, $\vartheta^{j}_{k'}$, by $\vartheta^{j}_{k'+1}$, and $t^{i}_{k}$ by $t$. And, go to Step 2;

\item If $\sigma_{t}$ switches at $t$, then we update the rule (\ref{event3})(or (\ref{event4})) by replacing $\vartheta^{i}_{k}$ by the current state $\theta_i(\sigma_t,t)$, and $t^{i}_{k}$ by $t$. And go to Step 2;
\item Let $t^{i}_{k+1}=t^{i}_{k}+\xi^{i}_{k}$, an event is triggered at node $i$ by updating the state information in (\ref{pinning_ds1},\ref{pinning_ds2}) from $t^{i}_{k}$ by $t^{i}_{k+1}$,
\end{enumerate}
then system (\ref{pinning_ds1},\ref{pinning_ds2}) is stabilized at $s(t)$ in mean square sense.
\end{theorem}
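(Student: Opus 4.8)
The plan is to bootstrap from Theorem~\ref{thm1}. That theorem already establishes that if the instantaneous inequalities (\ref{event1}) (resp.\ (\ref{event2})) hold for every node $i$ and every time $\tau$, then system (\ref{pinning_ds1},\ref{pinning_ds2}) is exponentially stable at $s(t)$ in mean square; in its proof only the pointwise bound $\|z_i(\tau)\|\le\frac{(\beta\underline{\lambda}-\frac12\delta\bar{\lambda})}{\sqrt{c}\,\bar{\lambda}}\|\hat{x}_i(\tau)\|$ (summed over $i$) is used. Hence it suffices to check that the discrete-monitoring scheme of Steps~1--5, which inspects a state only at the latest event times, nevertheless forces this same pointwise bound for all $\tau$ under rule (\ref{event3}), resp.\ $\|z_i(\tau)\|\le a e^{-b\tau}$ for all $\tau$ under rule (\ref{event4}).

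First I would rewrite $z_i(t)$ from (\ref{z_i}) in terms of trajectory increments: the generic coupling summand equals $(x_j(t)-x_j(t^i_{k_i(t)}))-(x_i(t)-x_i(t^i_{k_i(t)}))$ and the pinning summand equals $(x_i(t)-x_i(t^i_{k_i(t)}))-(s(t)-s(t^i_{k_i(t)}))$. On an interval on which the control inputs $\vartheta^i_k=\theta_i(\sigma_t,t^i_k)$ and $\vartheta^j_{k_j(t)}$ stay constant, the pair $(x_i,x_j)$ solves (\ref{r2}) with $u_0=x_i(t^i_k)$, $v_0=x_j(t^i_k)$, $\theta=\vartheta^i_k$, $\vartheta=\vartheta^j_{k_j(t)}$, so (\ref{r1}) bounds the coupling increment by $\rho\big(t-t^i_k,\vartheta^i_k,\vartheta^j_{k_j(t)},x_i(t^i_k),x_j(t^i_k)\big)$; applying (\ref{r1}) to $(x_i,s)$ with input $\vartheta=0$ bounds the pinning increment by $\rho\big(t-t^i_k,\vartheta^i_k,0,x_i(t^i_k),s(t^i_k)\big)$, while the lower-bound map gives $\|\hat{x}_i(t)\|=\|x_i(t)-s(t)\|\ge\varrho\big(t-t^i_k,\vartheta^i_k,0,x_i(t^i_k),s(t^i_k)\big)$. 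Weighting the $\rho$-estimates by $-L_{ij}(\sigma_t)=|L_{ij}(\sigma_t)|$ and by $\epsilon D_i(\sigma_t)$, using $\|\Gamma\xi\|\le\|\Gamma\|\,\|\xi\|$, and summing over $j$, one sees that $\|z_i(t)\|$ is controlled (up to the constant $\|\Gamma\|$) by the left-hand side of (\ref{event3}) evaluated at $\xi=t-t^i_k$. By the definition of $\xi^i_k$ in (\ref{event3}), for $t^i_k\le t\le t^i_{k+1}\le t^i_k+\xi^i_k$ this left-hand side is $\le\frac{(\beta\underline{\lambda}-\frac12\delta\bar{\lambda})}{\sqrt{c}\,\bar{\lambda}}\varrho\big(t-t^i_k,\vartheta^i_k,0,x_i(t^i_k),s(t^i_k)\big)\le\frac{(\beta\underline{\lambda}-\frac12\delta\bar{\lambda})}{\sqrt{c}\,\bar{\lambda}}\|\hat{x}_i(t)\|$, i.e.\ (\ref{event1}) holds throughout $[t^i_k,t^i_{k+1}]$. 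Since these intervals cover $[0,\infty)$, Theorem~\ref{thm1}, Case~(1), applies. Running the identical chain with (\ref{event4}) gives $\|z_i(t)\|\le a e^{-bt}$ on each interval, which is (\ref{event2}), and Theorem~\ref{thm1}, Case~(2), applies.

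The point that must be handled with care — and the reason the scheme has Steps~3 and~4 rather than only Step~2 — is that (\ref{r1}) is valid only while the inputs feeding the two compared trajectories remain constant, whereas $\vartheta^i_k$ jumps whenever $\sigma_t$ switches (it contains $L(\sigma_t)$, $D_i(\sigma_t)$) and $\vartheta^j_{k_j(t)}$ jumps whenever $\sigma_t$ switches or the neighbor $j$ triggers. Accordingly I would let the effective update time of node $i$ be reset at the earliest of $t^i_k+\xi^i_k$, the next switching time of $\sigma_t$, and the next triggering time of any neighbor of $i$, recording fresh data $x_i(\cdot),x_j(\cdot),s(\cdot)$ and recomputing $\xi^i_k$ at each reset; this is exactly the bookkeeping of Steps~3--5, and it guarantees that on every resulting inter-event interval the left endpoint coincides with $t^i_{k_i(t)}$, the inputs are constant, and the estimate of the second paragraph holds verbatim. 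I expect this bookkeeping to be the main obstacle: one must confirm that each sub-interval really has constant inputs, that $\varrho$ is strictly positive on a right neighbourhood of each left endpoint — which holds as long as $x_i(t^i_k)\ne s(t^i_k)$, i.e.\ before stability is reached — so that $\xi^i_k>0$ and the event sequence is well posed, and that the countably many induced resets still leave (\ref{event1}) (resp.\ (\ref{event2})) valid for almost every $\tau$, which is all that Theorem~\ref{thm1} requires. The core estimate is nothing more than the translation of rules (\ref{event1}) and (\ref{event2}) through the defining properties of $\rho$ and $\varrho$.
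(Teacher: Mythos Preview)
Your proposal is correct and follows exactly the paper's own approach: the paper's entire proof is the one-line remark that rules (\ref{event3}) and (\ref{event4}) are, by construction of $\rho$ and $\varrho$, conservative estimations of (\ref{event1}) and (\ref{event2}), so Theorem~\ref{thm1} applies immediately. You have simply spelled out in detail what the paper leaves implicit---the reduction of $\|z_i\|$ to the $\rho$-weighted sum, the lower bound on $\|\hat{x}_i\|$ via $\varrho$, and the role of Steps~3--4 in keeping the inputs of (\ref{r2}) constant on each sub-interval.
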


This theorem can be derived from Theorem \ref{thm1} immediately. In fact,  event (\ref{event3}) is an estimation of event (\ref{event1}), event (\ref{event4}) is an estimation of event (\ref{event2}).

There is substantial difference between the discrete and continuous monitoring strategies. Generally speaking, the continuous monitoring require that every node collects its neighborhood states at every instant time, while discrete monitoring does not need this step. As shown in Table \ref{table1}, the continuous monitoring scheme costs higher communication load than the discrete monitoring. As a pay-off, we will show in the numerical example section that the frequencies of triggering events in the continuous monitoring are much lower than that the discrete monitoring requires. That is, the continuous monitoring costs lower computation load than the discrete monitoring.

\begin{table}[h]
\centering
\caption{Continuous \emph{vs} discrete time monitoring schemes\label{table1}}
\begin{tabular}{|r|p{.45\textwidth}|p{.45\textwidth}|}
\hline
Step&Continuous monitoring & Discrete-time monitoring\\
\hline
\multirow{2}{*}{1 }
& At time $t_k^i$, agent $i$ updates feedback control law $\theta_i(\sigma_t,t_k^i)$
& At time $t_k^i$, agent $i$ updates feedback control law $\theta_i(\sigma_t,t_k^i)$
\\
\hline
\multirow{1}{*}{2 }
& If  $t<t_{k+1}^i$ in (\ref{event1}) or (\ref{event2}) & If $\xi<\xi_k^i$ in (\ref{event3}) or (\ref{event4})
\\
\hline
\multirow{4}{*}{3}
& then
&\\
& monitoring the states of $i$'s neighborhood $x_j(t)$, $j\in \mathcal N_i$ and target $s(t)$ (if $i$ is pinned at time $t$), $t\ge t_k^i$
&
\\
\hline
\multirow{2}{*}{4 }
&else
&else\\
&
go to step 1, replace $t_k^i$ by $t_{k+1}^i$
& go to step 1, replace $t_k^i$ by $t_k^i+\xi_k^i$
\\
\hline
\end{tabular}
\end{table}

\begin{remark}
In the discrete monitoring scenario, each node does not need to observe the information of its neighbors at every instants, but each node has to broadcast its updating law, $\theta^{i}_{k}$, to all its neighborhood once it is triggered.
\end{remark}


Similar to Proposition \ref{Zeno1}, we have:
\begin{proposition}\label{Zeno2}
\begin{itemize}
\item [(1)]
Suppose that hypotheses in Theorem \ref{thm2} hold. Under the rule (\ref{event3}) and the scheme described in Theorem \ref{thm2}, if system (\ref{pinning_ds1},\ref{pinning_ds2}) is not stable at $t$, there exists at least one node $i\in\{1,\cdots,m\}$ such that the next triggering event time strictly greater than $t$, namely, inter-event interval is strictly positive.
\item [(2)] Under the rule (\ref{event4}) and the scheme described in Theorem \ref{thm2}, if system (\ref{pinning_ds1},\ref{pinning_ds2}) does not reach stability, 
     the expectation of next inter-event interval of every node is strictly positive, further, it is lower bounded by some positive constant.
\end{itemize}
\end{proposition}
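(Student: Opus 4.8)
The plan is to mirror the reasoning of Proposition~\ref{Zeno1}, transporting it from the continuous-monitoring events (\ref{event1})--(\ref{event2}) to their discrete-monitoring estimates (\ref{event3})--(\ref{event4}), and the crucial observation is that, by construction, $\xi^{i}_{k}$ in (\ref{event3}) (resp.\ (\ref{event4})) is the largest $\xi$ for which the \emph{estimated} upper bound on $\|z_i(t^i_k+\xi)\|$ does not exceed the estimated lower bound $\frac{(\beta\underline{\lambda}-\frac12\delta\bar{\lambda})}{\sqrt{c}\bar{\lambda}}\varrho(\xi,\cdots)$ (resp.\ $a\exp(-b(\xi+t^i_k))$). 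First I would record the behaviour of both sides of the defining inequality at $\xi=0$: the left-hand side is built from $\rho(0,\cdot,\cdot,\cdot,\cdot)\equiv 0$, so it vanishes; hence $t^i_{k+1}>t^i_k$ as soon as the right-hand side is strictly positive at $\xi=0$.

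For part (1), rule (\ref{event3}), I would argue exactly as in the continuous case. Fix $t$ and suppose the system is not stable at $t$, so there is a node $j$ with $\hat{x}_j(t)\neq 0$. I would distinguish whether, at time $t$, node $j$'s defining inequality in (\ref{event3}) holds with equality or not. If it fails to be an equality, then by continuity of $\rho,\varrho$ (and of $\sigma$ on the relevant switching-free interval, with switchings handled by the reset Step~4 of the scheme) the inequality persists on a nontrivial interval $[t,t+\eta)$, so $\xi^j_{k}>0$ and the inter-event interval for $j$ is strictly positive. If instead equality holds at $t$ for node $j$ — and likewise for every node, since otherwise we are already done — then I would use property (ii) of $\varrho$, namely $\varrho(0,\cdot,\cdot,u_0,u_0)\equiv 0$ together with the fact that for $u_0\neq v_0$ the constructed $\varrho$ is strictly positive on a small right-interval of $\xi$: since $\hat{x}_j(t)=x_j(t)-s(t)\neq 0$, the term $\varrho(\xi,\vartheta^j_k,0,x_j(t^j_k),s(t^j_k))$ is strictly positive for small $\xi>0$ while the left-hand side starts from $0$, forcing $\xi^j_{k}>0$. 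Either way at least one node has a strictly positive next inter-event interval, which is the claim. (The subtle point, which I would flag, is that $\varrho$ in the scheme is evaluated with the \emph{reset} initial data $x_j(t^j_k)$ at the current $t^j_k=t$, so $\hat{x}_j(t)\neq 0$ indeed feeds into $\varrho$ with $u_0\neq v_0$; this is exactly where the reset Steps 3--4 of the scheme are used.)

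For part (2), rule (\ref{event4}), the right-hand side $a\exp(-b(\xi+t^i_k))$ is strictly positive for \emph{every} node at $\xi=0$ regardless of stability, while the left-hand side is $0$ at $\xi=0$; hence $\xi^i_k>0$ for every $i$ unconditionally. To get a uniform positive lower bound on the expectation, I would proceed as in the sketch following Proposition~\ref{Zeno1}: using the Lipschitz constant $L_f$ of $f$, the explicit estimate (\ref{Lips}) gives $\rho(\xi,\theta,\vartheta,u_0,v_0)\le \frac{\|\theta-\vartheta\|+L_f\|u_0-v_0\|}{L_f}(e^{L_f\xi}-1)$, and crude bounds $\|\theta_i\|\le c\,m\,\|\Gamma\|\,(\text{state bounds})+c\epsilon\|\Gamma\|(\cdots)$ on the piecewise-constant inputs; combining these over the at most $m-1$ neighbours plus the pinning term and comparing with $a\exp(-b(\xi+t^i_k))$ yields an inequality of the form $(\text{const})(e^{L_f\xi}-1)\le a e^{-b t^i_k}e^{-b\xi}$, from which $\xi^i_k$ is bounded below by a deterministic constant of the type $\frac{1}{b}\log\{1+\frac{1}{A+B}\}$ with $A,B$ as displayed after Proposition~\ref{Zeno1}; finally I would take expectations, using that the Markov switching only \emph{resets} (Step~4) and so can only shorten intervals, but the deterministic lower bound already accounts for a worst-case immediate switch, so the bound passes to the expectation. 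The main obstacle is bookkeeping rather than conceptual: one must keep careful track of which quantities are reset by Steps 3--4 of the scheme (neighbour updates and Markov switchings) so that the ``starts from $0$ at $\xi=0$'' property of $\rho$ and the strict positivity of $\varrho$ (or of $a e^{-b(\xi+t^i_k)}$) are invoked with the correct, current initial data; once that is pinned down the argument is a routine transcription of Proposition~\ref{Zeno1}.
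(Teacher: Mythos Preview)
Your proposal is correct and takes essentially the same approach as the paper, which in fact gives no explicit proof of Proposition~\ref{Zeno2} beyond the line ``Similar to Proposition~\ref{Zeno1}, we have:'' preceding the statement. Your plan to transport the case analysis of Proposition~\ref{Zeno1} via the $\rho(0,\cdot,\cdot,\cdot,\cdot)\equiv 0$ and $\varrho$-positivity observations from (\ref{event1})--(\ref{event2}) to (\ref{event3})--(\ref{event4}), together with the reset Steps~3--4 to keep the initial data current, is precisely the intended argument, and your derivation of the $\frac{1}{b}\log\{1+\frac{1}{A+B}\}$ bound for part~(2) matches the sketch the paper gives after Proposition~\ref{Zeno1}.
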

\begin{remark}
The discrete monitoring strategy implies the triggering events happen more frequently than continuous monitoring as a reward of a smaller cost of monitoring.
\end{remark}

\begin{remark}
For the discrete monitoring strategy, the computation complexity for every task depends on the number of multiplies in $\rho(\cdot)$ and $\varrho(\cdot)$. We suppose the number of multiplies of $\rho(\cdot)$ and $\varrho(\cdot)$ are respectively $N_1$, $N_2$. From the updating rules (\ref{event3}) and (\ref{event4}), the computation complexity for the next triggering time of every agent is at most $(m+1)N_1+N_2$.
\end{remark}

\section{Examples}
In this section, we present several numerical examples to illustrate these theoretical results. The system is an array of $5$ coupled Chua circuits with the map $f(\cdot)$ of node dynamics as follows:
\begin{eqnarray}
f(z)=\left[\begin{array}{c}p*(-z_{1}+z_{2}-g(z_{1}))\\
z_{1}-z_{2}+z_{3}\\
-q*z_{2}\end{array}\right]
\end{eqnarray}
where $g(z_{1})=m_{1}*z_{1}+1/2*(m_{0}-m_{1})*(|z_{1}+1|-|z_{1}-1|)$, with the parameters taken values as $p=9.78$, $q=14.97$, $m_{0}=-1.31$ and $m_{1}=-0.75$, which implies that the intrinsic node dynamics (without coupling terms) have a double-scrolling chaotic attractor \cite{chua}. Let $P=\Gamma=G=I_{3}$, where $I_{3}$ stands for the identity matrix of three dimensions. To estimate the parameter $\beta$ in the $QUAD$ condition, noting the Jacobin matrices of $f$ is one of the following
\begin{eqnarray*}
A_{1}=\left[\begin{array}{lll}-2.445&9.78&0\\
1&-1&1\\0&-14.97&0\end{array}\right],A_{2}=\left[\begin{array}{lll}3.0318&9.78&0\\
1&-1&1\\0&-14.97&0\end{array}\right],
\end{eqnarray*}
then we estimate $\beta'=\alpha-\lambda_{\max}((A_{2})^{s})=\alpha-9.1207$, where $9.1207$ is the largest eigenvalue of the symmetry parts of all Jacobin matrices of $f$.

The possible coupling graph topologies are shown in Fig.\ref{network_topology}. To select the pinned nodes, we add an extra virtual node (on behalf of $s(t)$) to the original network, which has a few links to the node that are pinned, then have an {\em extended network}. From the results in \cite{han}, one can see that if every extended network topology among the switching is strongly connected and the duration time at each network topology is sufficiently long, then there always exist positive matrices $P(\cdot)$, coupling gain $c$ and pinning gain $\epsilon$ such that condition (\ref{slow_condition}) holds, which implies that the system with persistent coupling and control can be stabilized. Hence, according to this viewpoint, one node from every strongly connected component is picked to be pinned. Noting topologies given in  Fig.\ref{network_topology} are all connected, every possible pinned node set is applicable. 

In this simulation, the graph topologies and pinned nodes of coupled system (\ref{pinning_ds1},\ref{pinning_ds2}) switch among these four states, as shown in Fig.\ref{network_topology} (a)-(d) respectively, induced by a homogeneous Markov chain, $\sigma_t$. $L(\sigma_t)$ is picked as the Laplacian of the graph $\mathcal G(\sigma_t)$, where each link has uniform weight $1$. Here, we pick the state space of the Markov chain $\sigma_t$ by $\{1,2,3,4\}$, and its transition matrix is given by
\begin{eqnarray*}
T=\left[
\begin{array}{cccc}
  -10 & 6.5 & 0 & 3.5 \\
  7 & -10 & 3 & 0  \\
  0 & 1 & -10 & 9\\
  4 & 6 & 0 & -10
\end{array}
\right].
\end{eqnarray*}
It can be seen from the transition matrix of $\sigma_t$, the expected sojourn time in each graph follows an exponential distribution with parameter $0.1$.
In the following, we pick $\alpha=10$ and $\beta=0.8803.$ 

The ODEs (\ref{pinning_ds1},\ref{pinning_ds2}) are numerically solved by the Euler method with time step $0.001$ (sec) and the time duration of the numerical simulations is $[0,10]$ (sec).

\begin{figure}[!t]
\centering
\subfigure[Pinned set $\{2,6\}$]{
\includegraphics[height=.35\textwidth,width=.4\textwidth]{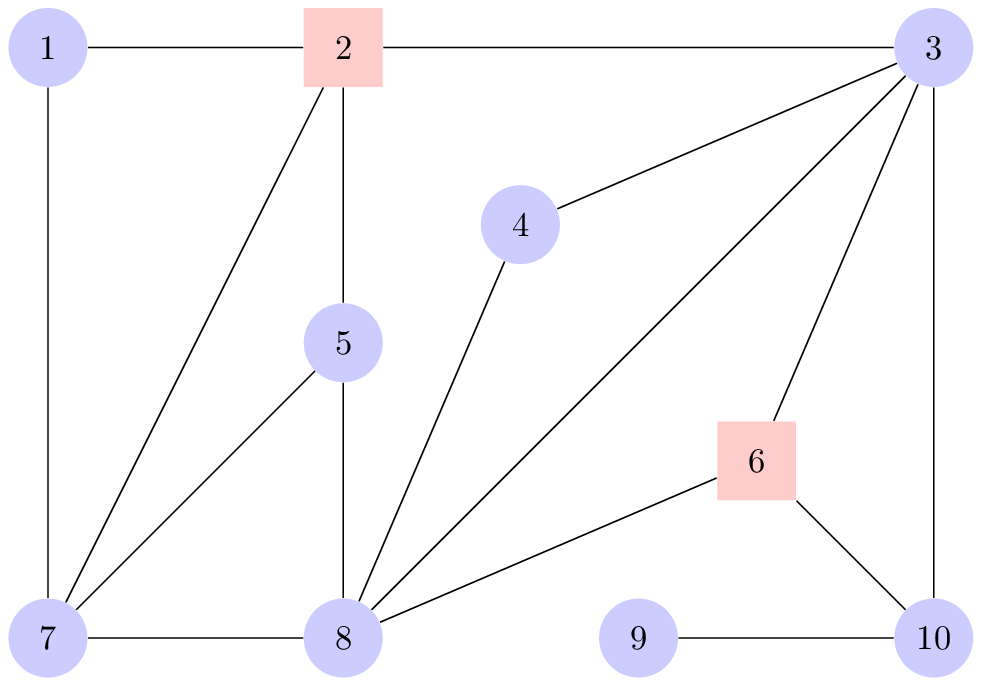}}
\subfigure[Pinned set $\{5,8\}$]{\includegraphics[height=.35\textwidth,width=.4\textwidth]{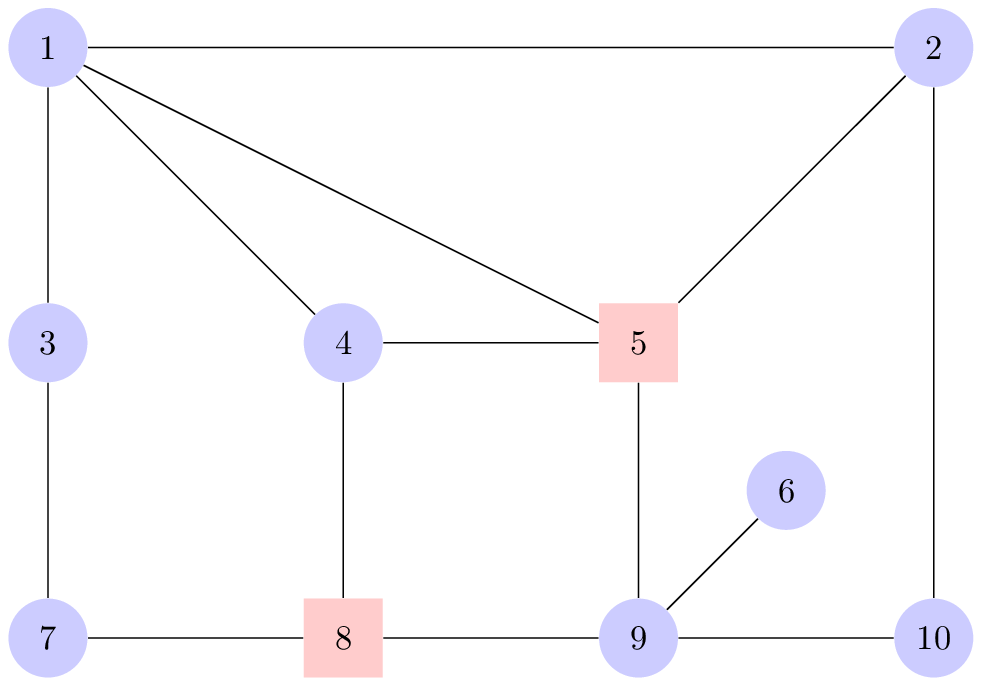}}
\subfigure[Pinned set $\{2,6\}$]{\includegraphics[height=.35\textwidth,width=.4\textwidth]{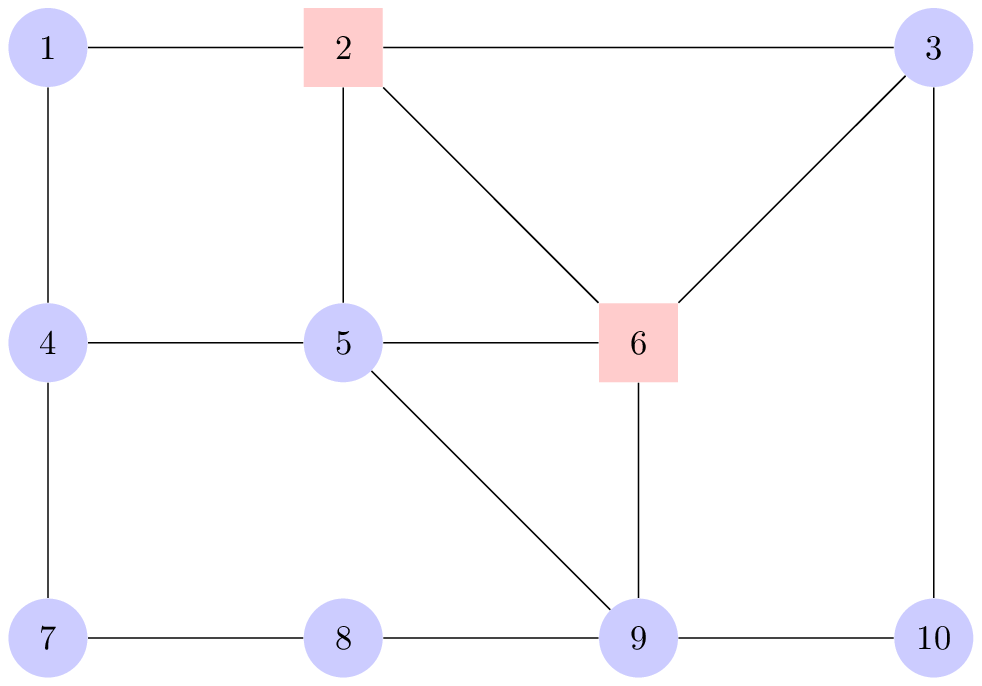}}
\subfigure[Pinned set $\{2,5\}$]{\includegraphics[height=.35\textwidth,width=.4\textwidth]{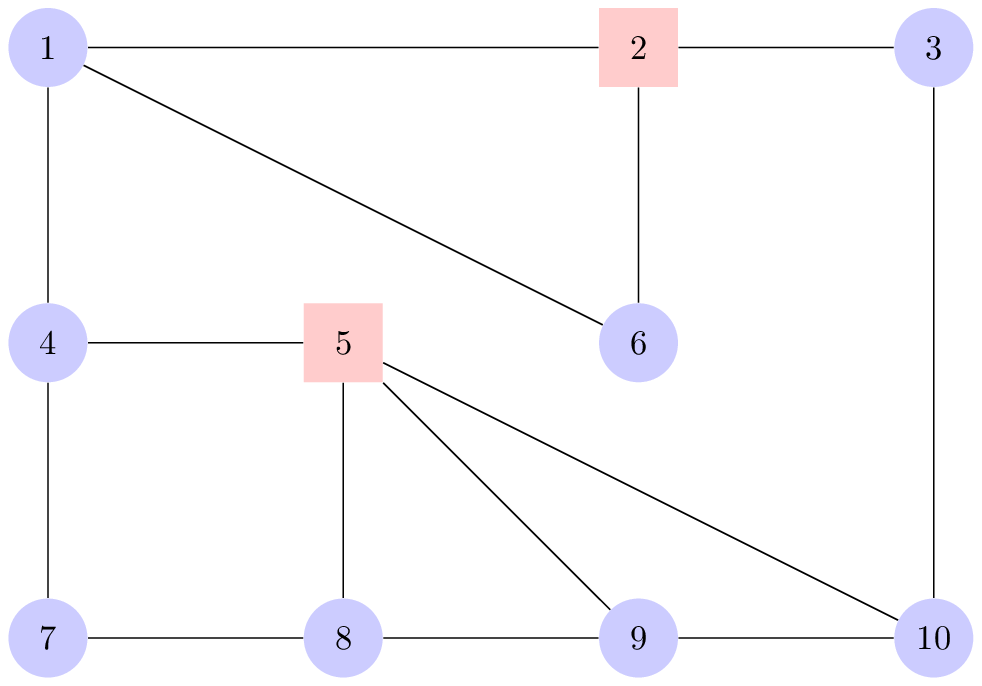}}
\label{network_topology}
\caption{The topologies of the graph of the coupled system and the pinned set.}
\end{figure}

\subsection{Continuous monitoring}
We give two examples to illustrate the updating rules (\ref{event1}) and (\ref{event2}) respectively. Pick $\delta =0.03$, $c=20$, $a = b =0.5$ and $\epsilon=0.5$. Then, it can be verified that $\{\alpha I_{m}-kL(u)-k\epsilon D(u)\}^s$ are negative definite and so the matrix inequality (\ref{slow_condition}) is satisfied.  And we have $\frac{(\beta\underline{\lambda}-\frac{1}{2}\delta\bar{\lambda})}
{\sqrt{c}\bar{\lambda}}=0.2736$.

We employ rule (\ref{event1}). Fig. \ref{fig_variation_x_con1} shows the dynamics of each components of the $10$ nodes and Fig. \ref{fig_variation_v} shows the dynamics of $V(t)$. All show that the coupled system (\ref{pinning_ds1}, \ref{pinning_ds2}) is stable. Similarly, we also employ the triggering event rule (\ref{event2}).  Fig. \ref{fig_variation_x_con2} illustrates the dynamics of each components of all nodes, and Fig. \ref{fig_variation_v} illustrates the dynamics of $V(t)$. One can see that the coupled systems (\ref{pinning_ds1},\ref{pinning_ds2}) is asymptotically stable at certain chaotic homogeneous trajectory. 

\begin{figure}[!t]
\centering
\includegraphics[height=.6\textwidth]{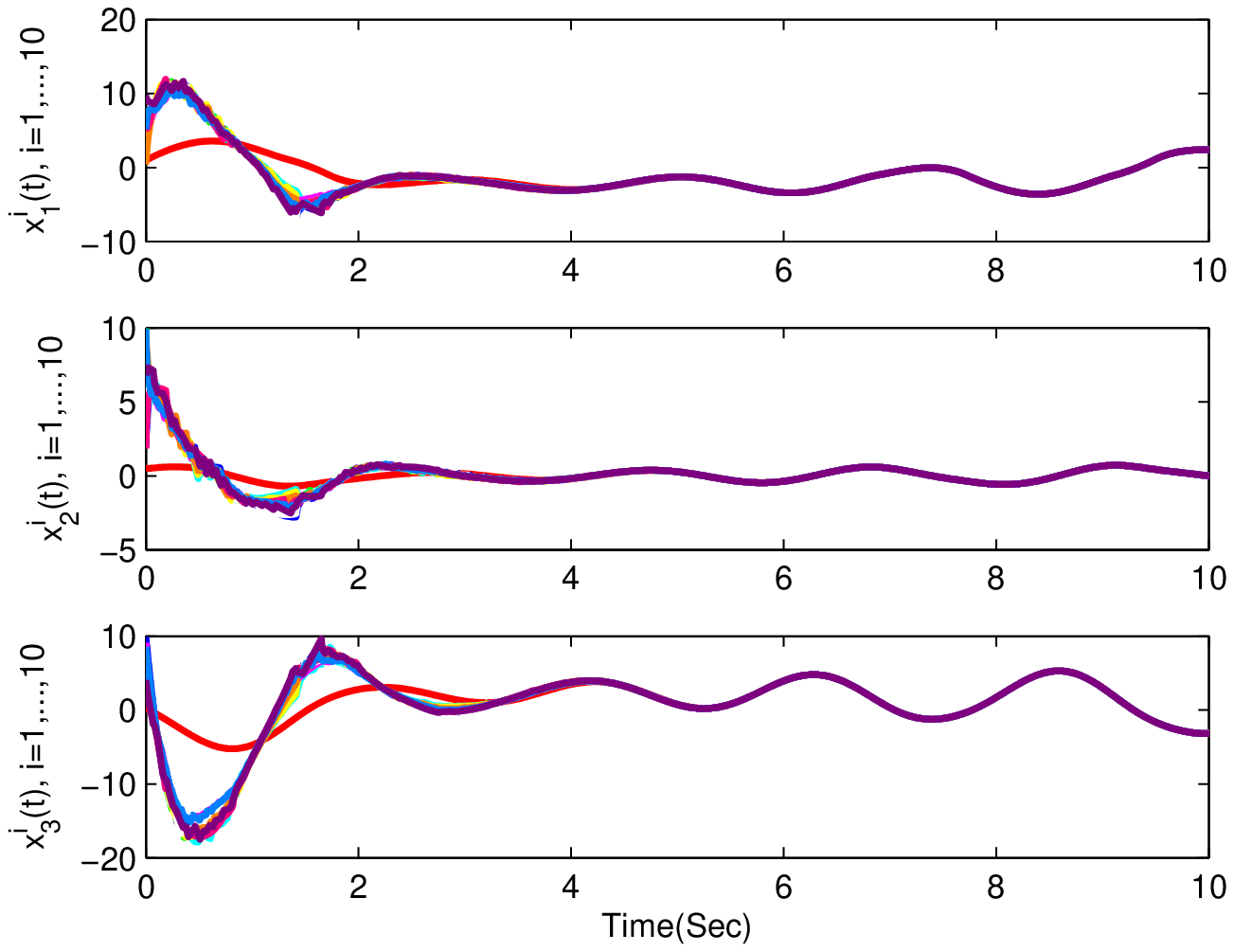} 
\caption{For  continuous monitoring with event triggering (\ref{event1}), the dynamics of components of the coupled system ((\ref{pinning_ds1},\ref{pinning_ds2})).}
\label{fig_variation_x_con1}
\end{figure}

\begin{figure}[!t]
\centering
\includegraphics[height=.6\textwidth]{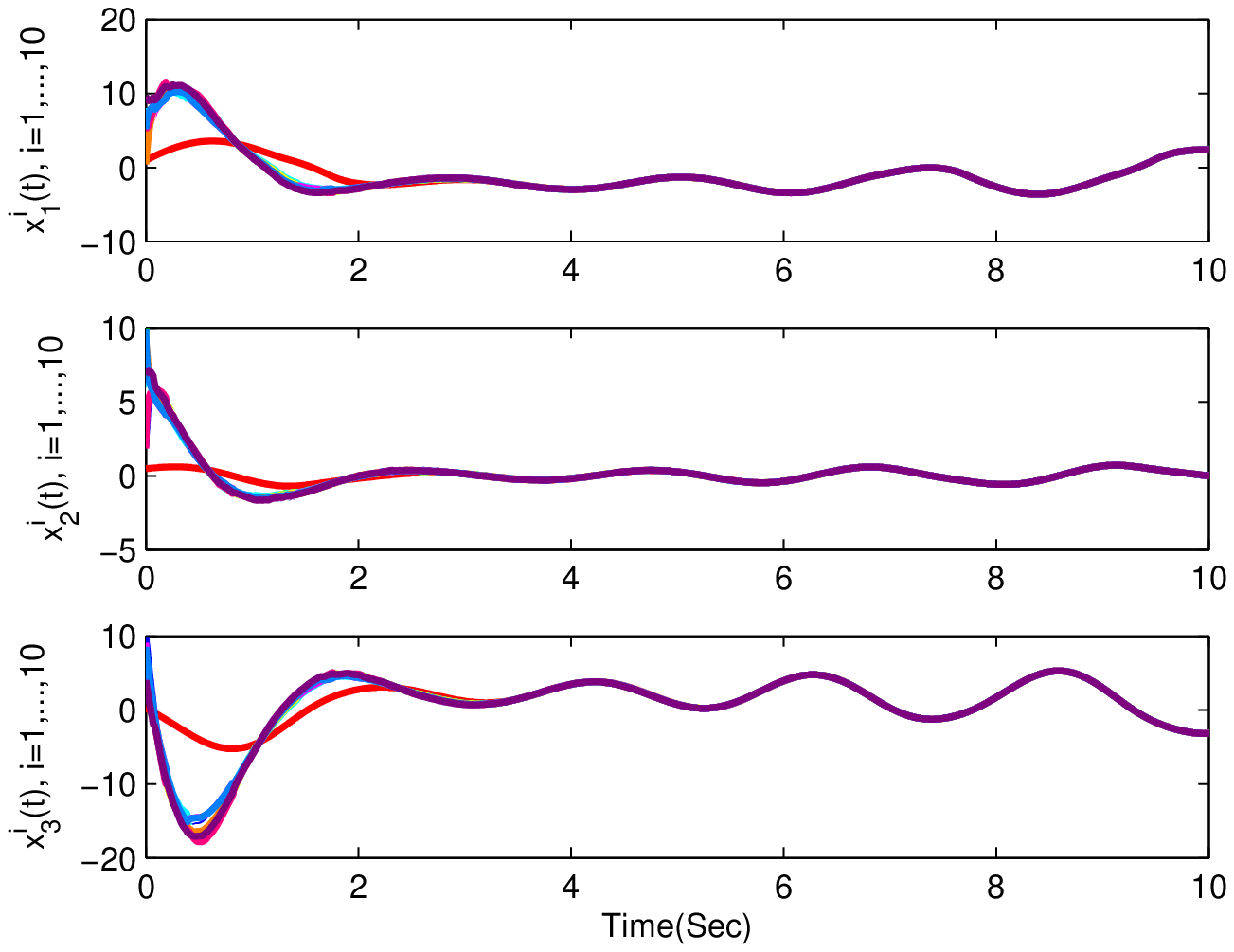} 
\caption{For  continuous monitoring with event triggering (\ref{event2}), the dynamics of components of the coupled system ((\ref{pinning_ds1},\ref{pinning_ds2})).}
\label{fig_variation_x_con2}
\end{figure}

\subsection{Discrete monitoring}

In this subsection, we illustrate the discrete-time monitoring strategies as described in Theorem \ref{thm2}. In these examples, we also take $c=20$, $a = b = 0.5$, and $\epsilon=0.5$. Fig.\ref{fig_variation_x_dis1} shows the dynamics of each components of the $10$ nodes and Fig.\ref{fig_variation_v} shows the dynamics of $V(t)$  under rule (\ref{event3}) in Theorem \ref{thm2}. All of them show that the coupled system (\ref{pinning_ds1},\ref{pinning_ds2}) is stable. By employing  rule (\ref{event4}) in Theorem \ref{thm2}, Fig. \ref{fig_variation_x_dis2} illustrates the dynamics of each components of all nodes and Fig. \ref{fig_variation_v} illustrates the dynamics of $V(t)$. These plots show that the coupled system (\ref{pinning_ds1},\ref{pinning_ds2}) is asymptotically stable. 

\begin{figure}[!t]
\centering
\includegraphics[height=.6\textwidth]{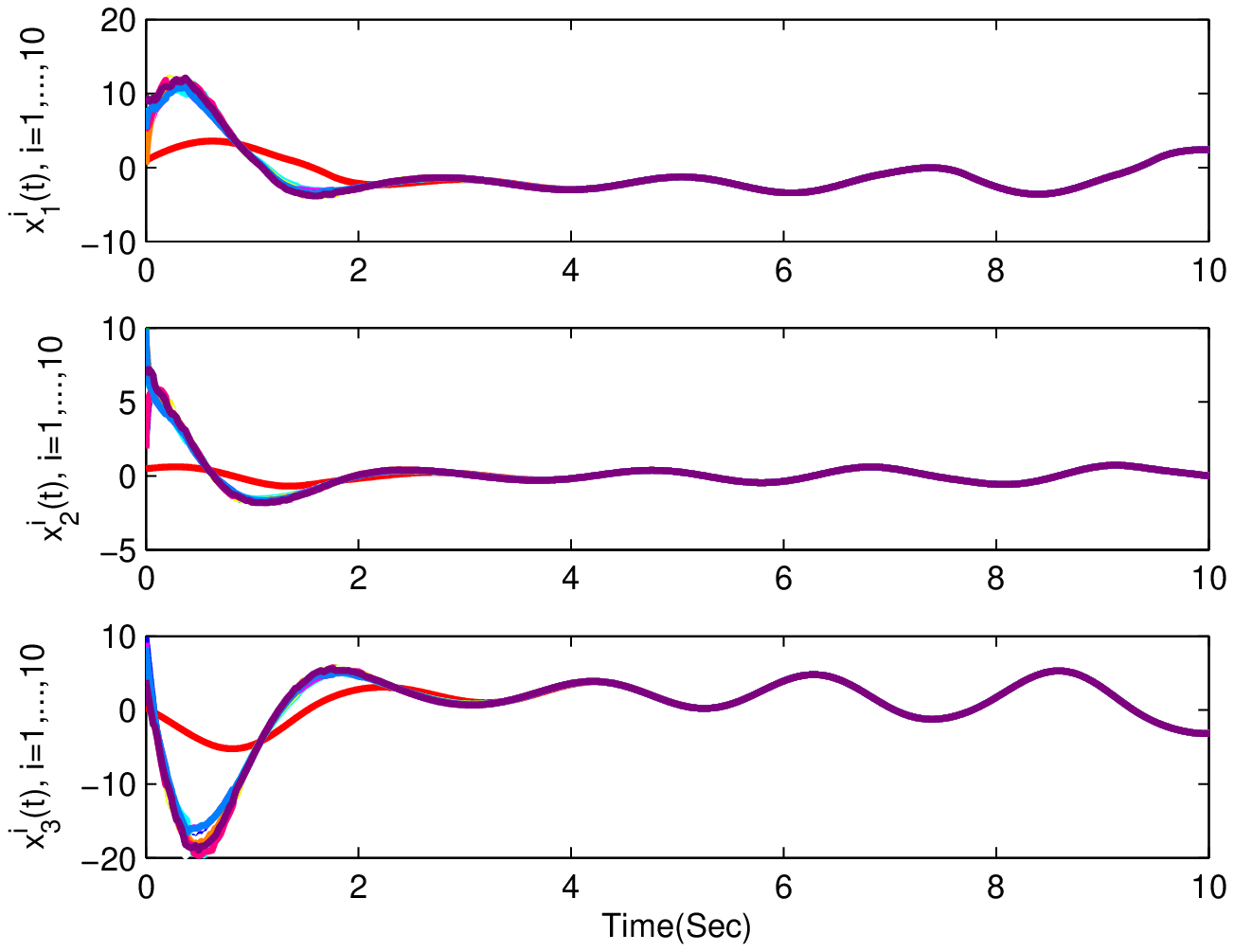} 
\caption{For  discrete monitoring with event triggering (\ref{event3}), the dynamics of components of the coupled system ((\ref{pinning_ds1},\ref{pinning_ds2})).}
\label{fig_variation_x_dis1}
\end{figure}

\begin{figure}[!t]
\centering
\includegraphics[height=.6\textwidth]{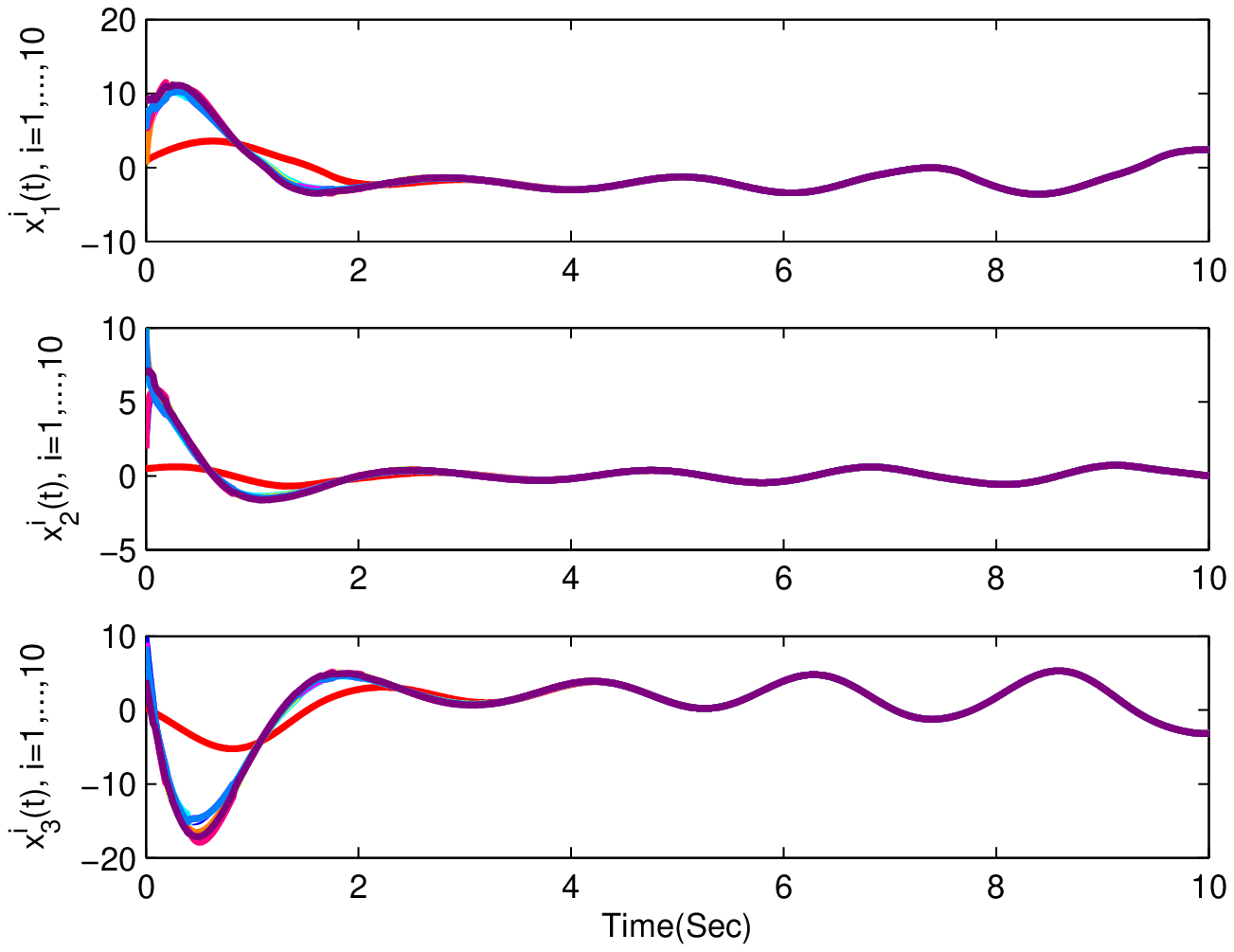} 
\caption{For  discrete monitoring with event triggering (\ref{event4}), the dynamics of components of the coupled system ((\ref{pinning_ds1},\ref{pinning_ds2})).}
\label{fig_variation_x_dis2}
\end{figure}

\begin{figure}[!t]
\begin{center}
\includegraphics[height=.4\textwidth]{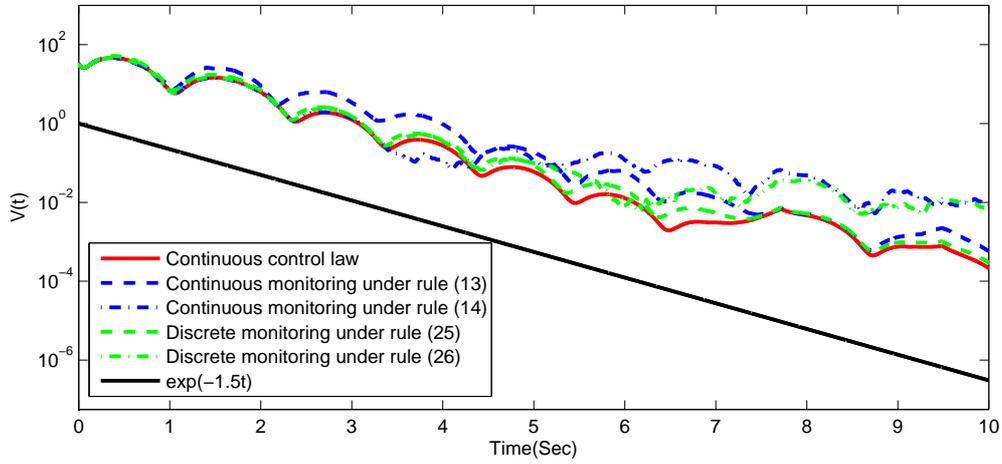}
\caption{The dynamics of Lyapunov function $V(t)$ for systems with continuous control law, continuous monitoring with rules (\ref{event1}),(\ref{event2}), discrete monitoring with rules (\ref{event3}), (\ref{event4}). } \label{fig_variation_v}
\end{center}
\end{figure}

\begin{figure}[!t]
\centering
\subfigure[]{
\includegraphics[width=.8\textwidth]{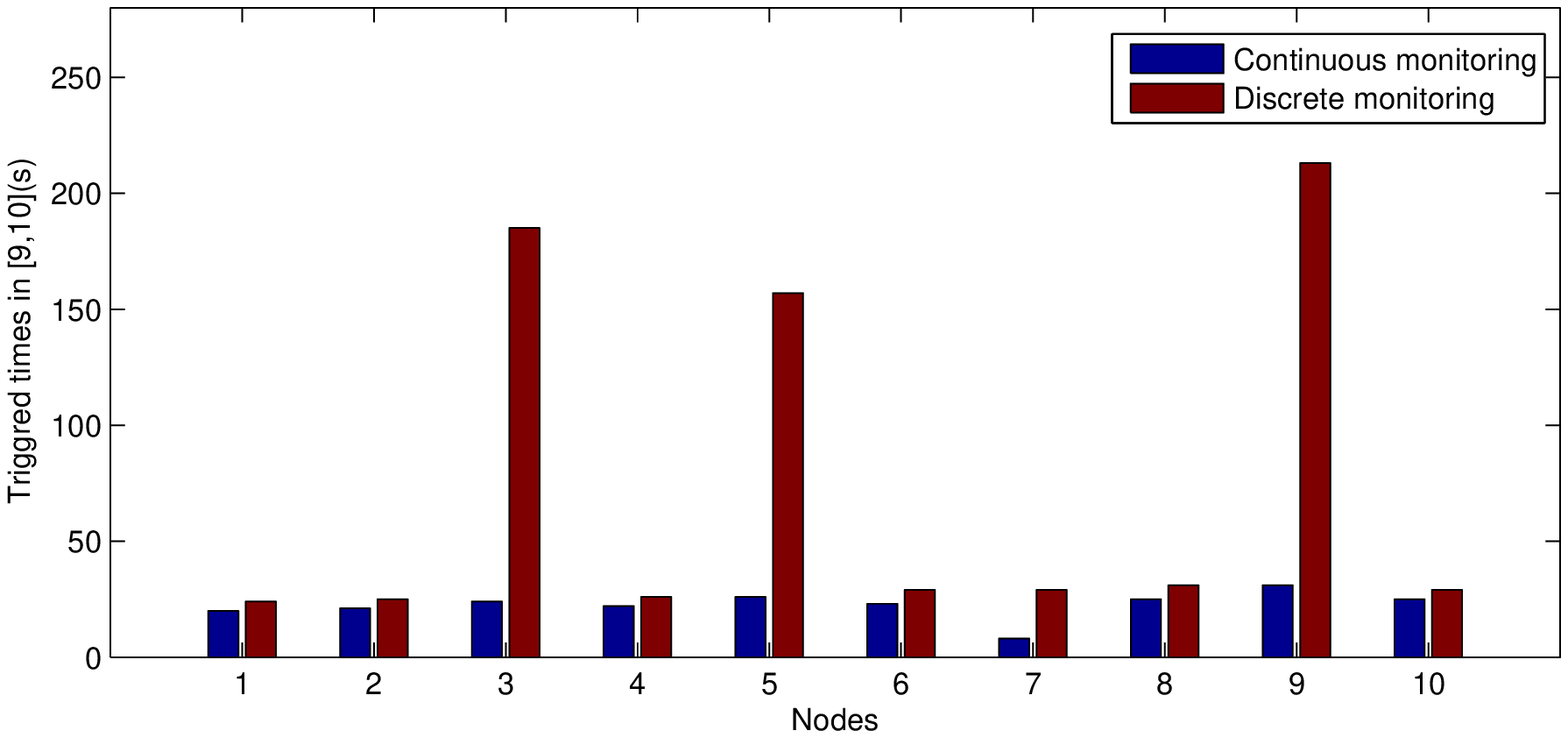} 
\label{fig_con_vs_dis1}
}
\subfigure[u]{
\includegraphics[width=.8\textwidth]{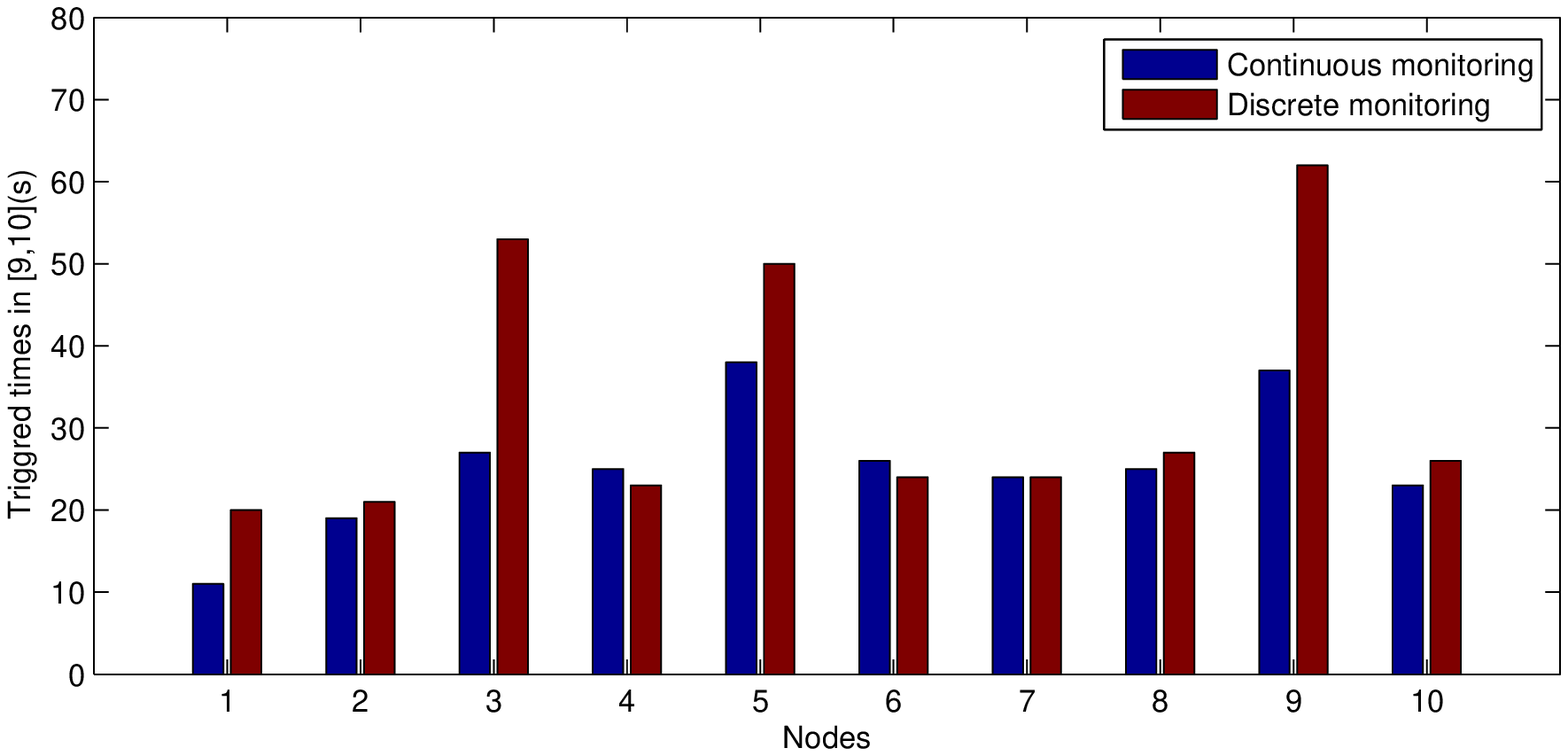} 
\label{fig_con_vs_dis2}
}
\caption{Histogram of triggering times of each node in $[9,10]$s. Under rules (a) (\ref{event1}) and (\ref{event3}), (b) (\ref{event2}) and (\ref{event4}).  }
\end{figure}
Furthermore, as shown in Figs.\ref{fig_con_vs_dis1}-\ref{fig_con_vs_dis2}, the events of updating the diffusion and pinning terms in the discrete monitoring strategy is much more than the continuous monitoring strategy, as we expected as Table \ref{table1}. As a trade-off, the performance of the discrete monitoring in terms of convergence rate of $V(t)$ is higher than the continuous monitoring, as shown by Fig.\ref{fig_variation_v}.Further, it can be seen from Fig. \ref{fig_variation_v} that the rules (\ref{event1}) and (\ref{event3}) have higher convergence rates than (\ref{event2}) and (\ref{event4}) and close to the original coupled system with simultaneous diffusion and pinning, as a reward of high event frequency of updating diffusion and pinning terms, as shown in Figs.\ref{fig_con_vs_dis1}-\ref{fig_con_vs_dis2}.

\section{Strength, limitations of the work and orients of future research}
Event-triggered algorithm is a new issue in the coordination control. Despite attracting increasing interests recently, there are a small number of papers, for instance \cite{Gao}-\cite{ZLiu}, which were concerned with pinning control of networks with event-triggered algorithm. Moreover, all of them can not handle the scenario considered in this paper. \cite{Gao} studied distributed event-triggered mechanism for pinning control of networks with {\em static} topology, while in our paper, we studied pinning networks with Markovian switching topologies and Markovian switching pinned node set.
\cite{Alderisio,ZLiu} investigated event-triggered pinning control of networks with static and switching topologies. However, from the sufficient conditions for complete synchronization that were given in \cite{Alderisio} and \cite{ZLiu}, the pinned coupled system with each possible topology among the switching topologies should be able to stabilize the coupled system; in comparison, in our work, there may exist some network topology and pinned node set in the state space of the Markovian chain that cannot stabilize the coupled subsystem. Moreover, they have not considered the switching of pinned node set, which were taken into consideration in the present paper.

However, there are a few limitations of the present methods. First, the present study assumes there is no delay in information transitions. But real networks  may have limit bandwidth limitation that will cause delays in message delivery. An interesting future research may take the time-delays into consideration. Second, in this work, we assume that the possible graph topologies and corresponding coupling matrices are already given and induced by a Markovian chain. It is sufficient for constructing a Lyapunov function to prove the stability of system. But in reality, the coupling  weights of every possible graph topology may be time-varying. it is an important issue, and will be addressed in the near future.

Third, this work mainly shows that if the linearly coupled system with persistent diffusion and control can be stabilized, then the proposed event-triggered rules can stabilize the system, too. It is important to extend the model to nonlinear cases. In \cite{lxwang1}-\cite{lxwang3}, the authors proved that fuzzy systems are universal approximators  for nonlinear dynamic systems. Hence, applying event-triggered strategies to fuzzy systems can be seen as a modest step. For this issue, we refer readers to  \cite{Qiu1}-\cite{Qiu3} . Recently, \cite{Peng} proposed a centralized event-triggered communication scheme for networked Takagi-Sugeno fuzzy systems, while distributed event-triggered algorithms for fuzzy systems are absent. This also leads to  interesting orients of our future work.

\section{Conclusions}
In this paper, event-triggered configurations and pinning control are employed to realize stability in linearly coupled dynamical systems with Markovian switching in both coupling matrix and pinned node set. Two monitoring scenarios are considered.
For continuous monitoring, each node observes its neighborhood's state and the target's state (if it is pinned) in an instantaneous way to determine the next triggering event time for updating state information. Instead, for discrete monitoring, each node can only obtain the state information at the event time or switching time of the underlying Markov chain to predict the next triggering event time for updating state information.
Once an event for a node is triggered, the diffusion coupling term and feedback control term of this node is updated.
Event triggering criteria are derived for each node that can be computed in a parallel way.
For both scenarios, it is proved that the coupled system can realize stability and the rule of piece-wise constant diffusion and pinning (if pinned) terms can efficiently reduce the computation load of the networked system, in comparison to the original coupled system. In addition, the discrete monitoring strategy can also reduce the communication load as well. Zeno behaviors can be proved excluded by proving the positivity of the lengths of the inter-event time intervals for some rules. Simulations are given to verify these theoretical results.

\section*{Acknowledgements}
This work is jointly supported by the National Natural Sciences
Foundation of China under Grant (Nos. 61273211 and 61273309), the Marie
Curie International Incoming Fellowship from the European Commission
(FP7-PEOPLE-2011-IIF-302421), the Program for New Century Excellent Talents
in University (NCET-13-0139), and the Programme of Introducing Talents of Discipline to
Universities (B08018).

\end{document}